\newcommand{\capac}{\mathrm{Cap}}
\newcommand{\supp}{\mathop{\mathrm{supp}}}\usepackage{color}
 \newtheorem{thm}{Theorem}[section]
 \newtheorem{cor}[thm]{Corollary}
 \newtheorem{lem}[thm]{Lemma}
 \theoremstyle{definition}
 \theoremstyle{remark}
 \numberwithin{equation}{section}
\begin{document}

%
%
%
%
%
%
%
%
%

\title[]
 {Sufficient criteria for obtaining Hardy inequalities on Finsler manifolds}

\author{\'{A}gnes Mester}

\address{%
Institute of Applied Mathematics \\ 
\'Obuda University \\ 
B\'ecsi \'ut 96B \\ 
1034 Budapest, Hungary.}

\email{mester.agnes@stud.uni-obuda.hu}

\author{Ioan Radu Peter}
\address{Department  of Mathematics \\ 
	Technical University of Cluj-Napoca \\ 
	Str. Memorandumului nr. 28 \\ 
	400114 Cluj-Napoca, Romania.}

\email{ioan.radu.peter@math.utcluj.ro}

\author{Csaba Varga}
\address{Faculty of Mathematics and Computer Science \\
	Babe\c{s}-Bolyai University \\ 
	Str. Mihail Kogălniceanu nr. 1 \\
	400084 Cluj-Napoca, Romania \\ 
	\& Department of Mathematics \\ 
	University of P\'{e}cs \\ 
	Ifj\'{u}s\'{a}g \'{u}tja 6\\ 
	7624 P\'{e}cs, Hungary.}

\email{csvarga@cs.ubbcluj.ro}

\subjclass{26D10, 53C60}

\keywords{Hardy inequality, Finsler manifold, reversibility constant, Gagliardo-Nirenberg inequality, Heisenberg-Pauli-Weyl uncertainty principle, superharmonic function}

\begin{abstract}
	We establish Hardy inequalities involving a weight function on complete, not necessarily reversible Finsler manifolds.
	We prove that the superharmonicity of the weight function provides a sufficient condition to obtain Hardy inequalities.
	Namely, if $\rho$ is a nonnegative function and $-\boldsymbol{\Delta} \rho \geq 0$ in weak sense, where 
	$\boldsymbol{\Delta}$ is the Finsler-Laplace operator defined by 
	$ \boldsymbol{\Delta} \rho = \mathrm{div}(\boldsymbol{\nabla} \rho)$, then we obtain the generalization of some Riemannian Hardy inequalities given in D'Ambrosio and Dipierro \cite{DAmbrosio}.
	
	By extending the results obtained, we prove a weighted Caccioppo\-li-type inequality, a Gagliardo-Nirenberg inequality and a Heisenberg-Pauli-Weyl uncertainty principle on complete Finsler ma\-nifolds. 
	Furthermore, we present some Hardy inequalities on Finsler-Hadamard manifolds with finite reversibility constant, 
	by defining the weight function with the help of the distance function.  
	Finally, we extend a weighted Hardy-inequality to a class of Finsler manifolds of bounded geometry.

\end{abstract}

\maketitle

\section{Introduction}

Consider the Euclidean space $\mathbb{R}^n$ with $n \geq 2$, and let $p \in (1,n)$. 
The classical multi-dimensional Hardy inequality asserts that
$$ \bigg( \frac{n-p}{p} \bigg)^p  \int_{\mathbb{R}^n} \frac{|u|^p}{|x|^p} dx \leq \int_{\mathbb{R}^n} |\nabla u|^p dx , 
\quad \forall u \in C_0^{\infty}(\mathbb{R}^n) ,$$
where the constant $ \big( \frac{n-p}{p} \big)^p$ is sharp, see for instance Balinsky, Evans and Lewis \cite[Section 1.2]{BalEvLew1}. This result has been generalized and improved in several directions, considering weighted Hardy inequalities, adding remainder terms, analyzing the best constant and the existence of minimizers, or replacing the set $\mathbb{R}^n$ by a (bounded or convex) domain $\Omega \subset \mathbb{R}^n$. For more details see, for example, Barbatis, Filippas and Tertikas \cite{BFT}, Brezis and Marcus \cite{BrezMarc}, Brezis and V\'azquez \cite{BV},  D'Ambrosio \cite{DAmbrosio2}, Gazzola, Grunau and Mitidieri \cite{GGM}, Lewis, Li and Li \cite{LewisLi} and references therein. 

In the last 20 years there has been a growing effort to extend these Hardy inequalities to Riemannian manifolds. In 1997 Carron \cite{Carron} established a method to obtain weighted $L^2$-type Hardy inequalities on complete non-compact Riemannian manifolds. This was followed by the generalizations and improvements obtained by Kombe and \"{O}zaydin \cite{Kombe1,Kombe2}, D'Ambrosio and Dipierro \cite{DAmbrosio}, Yang, Su and Kong \cite{YSK} and Xia \cite{Xia}. 

Moreover, recent advancements were made in the study of Hardy and Rellich inequa\-lities on Finsler manifolds. For instance, Krist\'aly and Repov\v{s} \cite{KristalyRepovs} considered Hardy and Rellich inequalities on reversible Finsler-Hadamard manifolds, which was followed by the paper of Yuan, Zhao and Shen \cite{YZS}, improving these inequalities on non-reversible Finsler manifolds. Recently, Bal \cite{Bal} and  Mercaldo, Sano and Takahashi \cite{MST} studied anisotropic Hardy inequalities for the Finsler $p$-Laplacian on reversible Minkowski spaces, then Zhao \cite{Zhao} proved weighted $L^p$-Hardy inequalities on non-reversible Finsler manifolds. Note that these investigations applied constraints on the mean covariation and the flag curvature, and showed that the results obtained depend deeply on the curvature of the manifold and on the non-Riemannian nature of the Finsler structure, measured by the reversibility constant and uniformity constant (see Section \ref{preliminaries}).

In this paper we establish a method to obtain weighted Hardy inequalities on forward complete, not necessarily reversible Finsler manifolds, extending the method given by D'Ambrosio and Dipierro \cite{DAmbrosio} and complementing some of the results obtained by Zhao \cite{Zhao}. 
We prove that the superharmonicity of the weight function is sufficient to obtain Hardy inequalities in the Finslerian setting, 
without applying further assumptions on the geometric properties of the manifold. 
In order to avoid further technicalities, we consider only the case $p = 2$, therefore obtaining $L^2$-type Hardy inequalities.
However, by applying suitable modifications in the proofs, our results can be extended to any $p > 1$. Also, note that the inequalities obtained can be established on backward complete Finsler manifolds in a similar manner.

In order to present our main results, we briefly introduce some notations, for the detailed definitions see Section \ref{preliminaries}. 
Let $(M,F,\mathsf{m})$ be a forward complete Finsler manifold endowed with a not necessarily reversible Finsler structure $F$, the polar transform $F^{*}$ and a smooth measure $\mathsf{m}$, and let $\Omega \subset M$ be an open set. Suppose that $\rho \in W_{\mathrm{loc}}^{1,2}(\Omega)$ is a nonnegative weight function such that $\rho$ is superharmonic on $\Omega$ in weak sense, i.e. 
$- \boldsymbol{\Delta} \rho \geq 0$ on $\Omega$ in the distributional sense. 
Here   
$\boldsymbol{\Delta} \rho = \mathrm{div}(\boldsymbol{\nabla} \rho)$, and 
$\boldsymbol{\Delta}, \mathrm{div}, \boldsymbol{\nabla}$ denote the Finsler-Laplace operator, the divergence and the 
gradient \mbox{operator}, respectively. Then we have the following weighted $L^2$-Hardy inequality: 

\begin{thm} \label{Hardy1}
	Let $\rho \in W_{\mathrm{loc}}^{1,2}(\Omega)$ be a nonnegative function such that $\rho$ is superharmonic on $\Omega$ in weak sense.
	Then $\frac{F^{*2}(D\rho)}{\rho^2} \in L_{\mathrm{loc}}^1(\Omega)$ and the Hardy inequality 
	\begin{equation}\label{ineqmain}
	\int_{\Omega} \frac{u^2}{\rho^2} F^{*2}(x, D\rho) ~\mathrm{d}\mathsf{m}
	\leq 4 \int_{\Omega}F^{2}(x, \boldsymbol{\nabla}u) ~\mathrm{d}\mathsf{m}
	\end{equation}
	holds for every function $u \in C^{\infty}_0(\Omega)$. 
\end{thm}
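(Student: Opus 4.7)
The plan is to adapt the Riemannian argument of D'Ambrosio and Dipierro \cite{DAmbrosio} by selecting a nonnegative test function built from $u$ and $\rho$, substituting it into the weak formulation of $-\boldsymbol{\Delta}\rho \geq 0$, and absorbing the resulting cross term by Cauchy--Schwarz. Fix $\varepsilon > 0$, set $\rho_\varepsilon := \rho + \varepsilon$, and take the Lipschitz, compactly supported, nonnegative test function $\phi_\varepsilon := u^2/\rho_\varepsilon$. Weak superharmonicity yields $\int_\Omega D\phi_\varepsilon(\boldsymbol{\nabla}\rho)\, \mathrm{d}\mathsf{m} \geq 0$, and expanding $D\phi_\varepsilon = \frac{2u\, Du}{\rho_\varepsilon} - \frac{u^2\, D\rho}{\rho_\varepsilon^2}$ together with the Finslerian identity $D\rho(\boldsymbol{\nabla}\rho) = F^{*2}(x, D\rho)$ rearranges this into
\[
\int_\Omega \frac{u^2\, F^{*2}(x, D\rho)}{\rho_\varepsilon^2}\, \mathrm{d}\mathsf{m} \leq \int_\Omega \frac{2u\, Du(\boldsymbol{\nabla}\rho)}{\rho_\varepsilon}\, \mathrm{d}\mathsf{m}.
\]

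The main step is then to estimate the right-hand side. From the defining property of the dual norm, $Du(\boldsymbol{\nabla}\rho) \leq F^*(x, Du)\, F(x, \boldsymbol{\nabla}\rho) = F^*(x, Du)\, F^*(x, D\rho)$, combined with a careful sign splitting of $u$ that handles the non-reversibility of $F$, one obtains the pointwise bound
\[
\frac{2u\, Du(\boldsymbol{\nabla}\rho)}{\rho_\varepsilon} \leq \frac{2|u|\, F^*(x, Du)\, F^*(x, D\rho)}{\rho_\varepsilon}.
\]
An integral Cauchy--Schwarz applied to the factorisation $[|u|\, F^*(x, D\rho)/\rho_\varepsilon] \cdot [2 F^*(x, Du)]$ then delivers $A_\varepsilon \leq 2\sqrt{A_\varepsilon}\sqrt{B}$, where $A_\varepsilon := \int_\Omega u^2 F^{*2}(x, D\rho)/\rho_\varepsilon^2\, \mathrm{d}\mathsf{m}$ and $B := \int_\Omega F^2(x, \boldsymbol{\nabla}u)\, \mathrm{d}\mathsf{m}$. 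Since $A_\varepsilon$ is automatically finite (because $\rho_\varepsilon \geq \varepsilon > 0$ and $F^{*2}(x, D\rho) \in L^1_{\mathrm{loc}}(\Omega)$ by the hypothesis $\rho \in W^{1,2}_{\mathrm{loc}}$), this bootstraps to $A_\varepsilon \leq 4B$.

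Finally, letting $\varepsilon \downarrow 0$ and applying the monotone convergence theorem to $A_\varepsilon$ recovers \eqref{ineqmain}; local integrability $F^{*2}(x, D\rho)/\rho^2 \in L^1_{\mathrm{loc}}(\Omega)$ follows by plugging into this inequality any $u \in C_0^\infty(\Omega)$ that is identically $1$ on a prescribed compact subset of $\Omega$. The principal technical obstacle I expect is precisely the sign analysis in the second paragraph: because $F$ is not assumed reversible, one generally has $F^*(x, -Du) \neq F^*(x, Du)$, so the argument must be organized so that only the forward co-norm $F^*(x, Du) = F(x, \boldsymbol{\nabla}u)$ (and not its reverse) survives on the right-hand side of the target estimate. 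A secondary point worth checking is the admissibility of $\phi_\varepsilon$ in the distributional inequality, which is routine given the regularity assumed on $\rho$ and $u$.
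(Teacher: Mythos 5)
Your overall strategy coincides with the paper's: the paper packages the argument as Lemma~\ref{basiclemma} (test $f_X\le-\mathrm{div}X$ against $\varphi=u^2$, then H\"older/Cauchy--Schwarz) applied with $X=\boldsymbol{\nabla}\rho_\alpha/\rho_\alpha$ and $f_X=F^{*2}(D\rho_\alpha)/\rho_\alpha^2$ (i.e.\ Theorem~\ref{Hardy2} with $\theta=0$), and you have simply unwound this into a direct computation with the test function $\phi_\varepsilon=u^2/\rho_\varepsilon$. The regularization $\rho_\varepsilon=\rho+\varepsilon$, the identity $D\rho(\boldsymbol{\nabla}\rho)=F^{*2}(D\rho)$, the bootstrap $A_\varepsilon\le 2\sqrt{A_\varepsilon}\sqrt{B}\Rightarrow A_\varepsilon\le 4B$, the monotone-convergence passage $\varepsilon\downarrow 0$, and the derivation of $F^{*2}(D\rho)/\rho^2\in L^1_{\mathrm{loc}}(\Omega)$ all match the paper (or Zhao, to whom the paper defers the computational core).

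The one step you do not actually carry out is the one that fails. The asserted pointwise bound $2u\,Du(\boldsymbol{\nabla}\rho)\le 2|u|\,F^*(x,Du)\,F^*(x,D\rho)$ is false in general on $\{u<0\}$ when $F$ is non-reversible: there $2u\,Du(\boldsymbol{\nabla}\rho)=2|u|\,(-Du)(\boldsymbol{\nabla}\rho)\le 2|u|\,F^*(x,-Du)\,F^*(x,D\rho)$, and no rearrangement of signs converts $F^*(x,-Du)$ into $F^*(x,Du)$ without paying a factor $r_F$; a one-dimensional Minkowski norm with $F(1)\ne F(-1)$ already violates your inequality at points where $u<0$ and $Du(\boldsymbol{\nabla}\rho)<0$. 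What your argument honestly delivers is $\int_\Omega u^2\rho^{-2}F^{*2}(x,D\rho)\,\mathrm{d}\mathsf{m}\le 4\int_\Omega F^{*2}(x,\mathrm{sgn}(u)Du)\,\mathrm{d}\mathsf{m}$, i.e.\ the stated inequality only for $u\ge 0$ or for reversible $F$. You correctly identified this as the principal obstacle but then asserted it away rather than resolving it; note that this is exactly the point at which the paper also stops supplying details, since both the verification of $f_X\le-\mathrm{div}X$ and the H\"older step of Lemma~\ref{basiclemma} are attributed to Zhao [Theorem~3.1]. A secondary, repairable point: $\phi_\varepsilon=u^2/\rho_\varepsilon$ is only a compactly supported $W^{1,2}\cap L^\infty$ function, not $C_0^\infty$, so inserting it into \eqref{superharmonicdef} requires a mollification argument using $F^*(D\rho)\in L^2_{\mathrm{loc}}(\Omega)$; the paper faces the same issue inside the proof that $f_X\le-\mathrm{div}X$, and you are right that it is routine.
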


The proof is based on the divergence theorem, see Ohta and Sturm \cite{Ohta-Sturm}. 

By further developing this technique, we obtain the following weighted Gagliardo-Nirenberg inequality:
\begin{cor} \label{GN}
	Let $\rho \in W_{\mathrm{loc}}^{1,2}(\Omega)$ be a nonnegative function such that $\rho$ is superharmonic on $\Omega$ in weak sense. Then
	\begin{equation*} 
	\int_{\Omega} u^2 \frac{F^*(x, D \rho)}{\rho} ~\mathrm{d}\mathsf{m}
	\leq 2 \left( \int_{\Omega} u^{2} ~\mathrm{d}\mathsf{m}\right)^{\frac{1}{2}}
	\left( \int_{\Omega} F^2(x, \boldsymbol{\nabla}u) ~\mathrm{d}\mathsf{m}\right)^{\frac{1}{2}}, 
	~ \forall u \in C^\infty_0(\Omega). 
	\end{equation*}	 
\end{cor}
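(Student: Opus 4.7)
The plan is to derive the weighted Gagliardo-Nirenberg inequality as a direct consequence of Theorem \ref{Hardy1} via the Cauchy-Schwarz inequality, factoring the integrand on the left so that one factor has the form appearing in the Hardy inequality. More precisely, I would write
\begin{equation*}
u^{2}\,\frac{F^{*}(x,D\rho)}{\rho} = |u|\cdot\frac{|u|\,F^{*}(x,D\rho)}{\rho},
\end{equation*}
and apply Cauchy-Schwarz in $L^{2}(\Omega,\mathsf{m})$ to the two factors, obtaining
\begin{equation*}
\int_{\Omega} u^{2}\,\frac{F^{*}(x,D\rho)}{\rho}\,\mathrm{d}\mathsf{m}
\leq \left(\int_{\Omega} u^{2}\,\mathrm{d}\mathsf{m}\right)^{\!1/2}
\left(\int_{\Omega} \frac{u^{2}}{\rho^{2}}\,F^{*2}(x,D\rho)\,\mathrm{d}\mathsf{m}\right)^{\!1/2}.
\end{equation*}

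Next, I would invoke Theorem \ref{Hardy1} to bound the second factor on the right-hand side. Since $\rho$ is assumed nonnegative and superharmonic in the weak sense, the hypotheses of Theorem \ref{Hardy1} are satisfied, so for every $u\in C_{0}^{\infty}(\Omega)$ we have
\begin{equation*}
\int_{\Omega} \frac{u^{2}}{\rho^{2}}\,F^{*2}(x,D\rho)\,\mathrm{d}\mathsf{m}
\leq 4 \int_{\Omega} F^{2}(x,\boldsymbol{\nabla}u)\,\mathrm{d}\mathsf{m}.
\end{equation*}
Substituting this estimate into the previous display and taking the square root yields the constant $2$ in front, which is exactly the claimed inequality.

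A few technical details warrant brief attention but none should constitute a real obstacle. First, Theorem \ref{Hardy1} already guarantees $F^{*2}(D\rho)/\rho^{2}\in L^{1}_{\mathrm{loc}}(\Omega)$, so combined with $u\in C_{0}^{\infty}(\Omega)$ all integrals in the argument are finite and the Cauchy-Schwarz application is justified; one should also note the convention that integrands involving $1/\rho$ are interpreted as $0$ on the set $\{\rho=0\}$, as is implicit in the statement of Theorem \ref{Hardy1}. Second, since $F^{*}(x,\cdot)$ is a (possibly asymmetric) norm on $T_{x}^{*}M$, the quantity $F^{*}(x,D\rho)/\rho$ is nonnegative, so no absolute values are lost. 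In short, the corollary is an almost immediate consequence of Theorem \ref{Hardy1}, and the only conceptual point is recognizing the Cauchy-Schwarz split that matches the Hardy integrand.
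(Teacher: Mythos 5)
Your proof is correct and is essentially the paper's own argument: the paper proves a general H\"older-type estimate (Lemma \ref{basiclemma1}), specializes $X$ and $f_X$ as in the proof of Theorem \ref{Hardy2} with $\theta=0$, and then takes $q=1$, $s=2$ (so $p=p'=2$), which unwinds to exactly your Cauchy--Schwarz split $u^2 F^*(D\rho)/\rho = |u|\cdot |u|F^*(D\rho)/\rho$ followed by the Hardy inequality of Theorem \ref{Hardy1}. Your handling of the integrability and the $\{\rho=0\}$ convention is consistent with what Theorem \ref{Hardy1} already provides, so nothing is missing.
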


Similarly, we also prove the following weighted Heisenberg-Pauli-Weyl uncertainty principle:
\begin{cor} \label{HPW}
	Let $\rho \in W_{\mathrm{loc}}^{1,2}(\Omega)$ be a nonnegative function such that $\rho$ is superharmonic on $\Omega$ in weak sense. Then
	\begin{equation*} 
	\int_{\Omega} u^2 ~ \mathrm{d}\mathsf{m}
	\leq 2 \left( \int_{\Omega} F^2(x, \boldsymbol{\nabla}u) ~\mathrm{d}\mathsf{m}\right)^{\frac{1}{2}}
	\left( \int_{\Omega} u^{2} \frac{\rho^{2}}{F^{*2}(x, D \rho)} ~\mathrm{d}\mathsf{m}\right)^{\frac{1}{2}}, 
	\forall u \in C^\infty_0(\Omega). 
	\end{equation*}	
\end{cor}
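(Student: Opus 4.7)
The plan is to derive Corollary \ref{HPW} directly from Theorem \ref{Hardy1} by a single application of the Cauchy--Schwarz inequality, exactly as one obtains the Heisenberg--Pauli--Weyl principle from the classical Hardy inequality on $\mathbb{R}^n$.

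First I would split the integrand $u^2$ as the product of the two factors that appear naturally on both sides of \eqref{ineqmain}: writing
\[
u^2 = \left(|u|\,\frac{F^{*}(x,D\rho)}{\rho}\right) \cdot \left(|u|\,\frac{\rho}{F^{*}(x,D\rho)}\right)
\]
wherever $\rho>0$ and $F^{*}(x,D\rho)>0$, and then applying the Cauchy--Schwarz inequality in $L^2(\Omega,\mathsf{m})$ to obtain
\[
\int_\Omega u^2\,\mathrm{d}\mathsf{m} \leq \left(\int_\Omega \frac{u^2}{\rho^2} F^{*2}(x,D\rho)\,\mathrm{d}\mathsf{m}\right)^{1/2} \left(\int_\Omega u^2\,\frac{\rho^2}{F^{*2}(x,D\rho)}\,\mathrm{d}\mathsf{m}\right)^{1/2}.
\]
Then I would invoke Theorem \ref{Hardy1} to bound the first factor by $2\bigl(\int_\Omega F^2(x,\boldsymbol{\nabla}u)\,\mathrm{d}\mathsf{m}\bigr)^{1/2}$, which immediately yields the stated inequality with the sharp-looking constant $2$.

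The only genuine technical point is the set $Z=\{x\in\Omega:\rho(x)F^{*}(x,D\rho)=0\}$, where the factorisation is not literally defined. If the right-hand side of the desired inequality is to be read as $+\infty$ on the subset where $F^{*}(x,D\rho)=0$ and $u\neq 0$, the statement is vacuous there, so one may restrict integration to $\Omega\setminus Z$; the contribution of $Z$ vanishes on the left of \eqref{ineqmain} as well, since $F^{*2}(x,D\rho)/\rho^2$ only sees the complement. Alternatively one regularises by replacing $F^{*}(x,D\rho)$ with $F^{*}(x,D\rho)+\varepsilon$ in the denominator, applies Cauchy--Schwarz, and lets $\varepsilon\downarrow 0$ using monotone convergence. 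This measure-zero bookkeeping is the only minor obstacle; the rest is a one-line consequence of \eqref{ineqmain}.
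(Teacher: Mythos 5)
Your proof is correct and is essentially the paper's argument: the paper derives Corollary \ref{HPW} from the more general H\"older-based Lemma \ref{basiclemma1} (taking $q=0$, $p=p'=s=2$, and $X$, $f_X$ as in \eqref{X_and_f_X} with $\theta=0$), which at those parameters reduces exactly to your Cauchy--Schwarz splitting of $u^2$ into $\bigl(|u|F^*(D\rho)/\rho\bigr)\cdot\bigl(|u|\rho/F^*(D\rho)\bigr)$ followed by an application of the Hardy inequality \eqref{ineqmain} to the first factor. Your handling of the degenerate set (via restriction or the $\varepsilon$-regularisation) matches in spirit the paper's use of $\rho_\alpha=\rho+\alpha$.
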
 

Note that when $(M,F) = (M,g)$ is a Riemannian manifold, the Finsler-Laplace ope\-rator $\boldsymbol{\Delta}$ and the gradient $\boldsymbol{\nabla}$ reduce to the Laplace-Beltrami operator $\Delta_g$ and the usual gradient operator $\nabla$. 
Furthermore, by the Riesz representation theorem, one can identify the tangent space $T_xM$ and its dual space $T_x^*M$, and the Finsler metrics $F$ and $F^*$ become the norm $|\cdot|_g$ induced by the Riemannian metric $g$. Therefore, our results extend the Hardy inequalities obtained in D'Ambrosio and Dipierro \cite{DAmbrosio} to the class of forward complete Finsler manifolds.

Finally, we extend Theorem \ref{Hardy1} to the  
class of Finsler manifolds of bounded geometry in the sense of Ohta and Sturm \cite{Ohta-Sturm}, as follows. 
\begin{cor} \label{corollForInclusion2}
	Let $(M,F)$ be a geodesically complete, $2$-hyperbolic Finsler ma\-nifold such that $F$ is uniformly convex, $r_F < \infty$ and 
	$\mathrm{Ric}_N \geq -\kappa$, $\kappa > 0$. 
	Let $\Omega \subset M$ be an open set such that $M \setminus \Omega$ is compact with $\capac_2(M \setminus \Omega, M) = 0$. 
	If $\rho \in W_{\mathrm{loc}}^{1,2}(\Omega)$ is a nonnegative function such that $\rho$ is superharmonic on $\Omega$ in weak sense, then the Hardy inequality 
	\begin{equation*}
	\int_{\Omega} \frac{u^2}{\rho^2} F^{*2}(x, D\rho) ~\mathrm{d}\mathsf{m}
	\leq 4 \int_{\Omega}F^{2}(x, \boldsymbol{\nabla}u) ~\mathrm{d}\mathsf{m}
	\end{equation*}
	holds for every function $u \in C^{\infty}_0(M)$.
\end{cor}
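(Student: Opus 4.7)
The plan is to extend Theorem \ref{Hardy1} from $C^\infty_0(\Omega)$ to $C^\infty_0(M)$ by cutting off, in a capacitary sense, any mass that an arbitrary $u \in C^\infty_0(M)$ may carry near $M \setminus \Omega$. Since $\capac_2(M \setminus \Omega, M) = 0$ and $M \setminus \Omega$ is compact, I would first extract a sequence of cut-offs $\phi_n \in C^\infty_0(M)$ with $0 \leq \phi_n \leq 1$, $\phi_n \equiv 1$ on an open neighborhood of $M \setminus \Omega$, and $\int_M F^2(x, \boldsymbol{\nabla}\phi_n)\,\mathrm{d}\mathsf{m} \to 0$. The bounded-geometry hypotheses ($F$ uniformly convex, $r_F < \infty$, $\mathrm{Ric}_N \geq -\kappa$, and $2$-hyperbolicity) place us in the Ohta--Sturm framework \cite{Ohta-Sturm}, where $C^\infty_0(\Omega)$ is dense in $W^{1,2}_0(\Omega)$ and Theorem \ref{Hardy1} therefore extends by approximation to every $v \in W^{1,2}_0(\Omega)$.

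For fixed $u \in C^\infty_0(M)$ I would then apply this extended inequality to $u_n := u(1-\phi_n) \in W^{1,2}_0(\Omega)$, which has compact support contained in $\Omega$. The differential is linear, $Du_n = (1-\phi_n)\,Du - u\,D\phi_n$, so the subadditivity of the Minkowski norm $F^*$, the homogeneity $F^*(x, \lambda\xi) = \lambda F^*(x,\xi)$ for $\lambda \geq 0$, the reversibility bound $F^*(x, -\xi) \leq r_F F^*(x, \xi)$, and Young's inequality give, for every $\delta > 0$,
\begin{equation*}
F^{*2}(x, Du_n) \leq (1+\delta)(1-\phi_n)^2 F^{*2}(x, Du) + \bigl(1 + \tfrac{1}{\delta}\bigr) r_F^2\, u^2\, F^{*2}(x, D\phi_n).
\end{equation*}
Using $F(x, \boldsymbol{\nabla}v) = F^*(x, Dv)$, integration over $\Omega$ and the fact that $(1-\phi_n)^2 \leq 1$ yield
\begin{equation*}
\int_\Omega F^2(x, \boldsymbol{\nabla}u_n)\,\mathrm{d}\mathsf{m} \leq (1+\delta)\int_M F^2(x, \boldsymbol{\nabla}u)\,\mathrm{d}\mathsf{m} + C(\delta, r_F, \|u\|_\infty) \int_M F^2(x, \boldsymbol{\nabla}\phi_n)\,\mathrm{d}\mathsf{m},
\end{equation*}
where the last term vanishes as $n \to \infty$ by the choice of $\phi_n$.

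Finally, I would pass to the limit. After extracting a subsequence along which $\phi_n \to 0$ almost everywhere, $u_n^2 \to u^2$ pointwise, and Fatou's lemma applied to the left-hand side of the Hardy inequality for $u_n$, combined with the estimate above for the right-hand side, produces
\begin{equation*}
\int_\Omega \frac{u^2}{\rho^2} F^{*2}(x, D\rho)\,\mathrm{d}\mathsf{m} \leq 4(1+\delta) \int_M F^2(x, \boldsymbol{\nabla}u)\,\mathrm{d}\mathsf{m};
\end{equation*}
sending $\delta \to 0$ completes the argument. The main technical obstacle I anticipate is the a.e.\ convergence of the capacity-minimizing cut-offs $\phi_n$ (equivalently, the $W^{1,2}$-removability of the set $M \setminus \Omega$), which is where the $2$-hyperbolicity and the Ohta--Sturm bounded-geometry assumptions are essential: they supply the Poincaré--Sobolev machinery needed to turn the vanishing of $\int F^2(x, \boldsymbol{\nabla}\phi_n)\,\mathrm{d}\mathsf{m}$ into pointwise information and, at the same time, to legitimize the density step used to extend Theorem \ref{Hardy1} to $W^{1,2}_0(\Omega)$.
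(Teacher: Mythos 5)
Your overall architecture coincides with the paper's: both arguments hinge on multiplying $u$ by $1-\phi_n$, where $\phi_n$ are capacity-minimizing cut-offs for the null set $M\setminus\Omega$, estimating $F^{*2}(x,Du_n)$ via positive homogeneity, the triangle inequality, the reversibility bound $F^*(x,-\xi)\le r_F F^*(x,\xi)$ and Young's inequality, and letting the cut-off contribution vanish. (The paper packages this as the inclusion $D^{1,2}(M)\subset D^{1,2}(\Omega)$ --- Theorem \ref{theorem_inclusion} and Corollary \ref{corollForInclusion} --- rather than running it directly on the Hardy inequality, but the computation in \eqref{utolso_egyenlotlenseg} is exactly your gradient estimate. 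Incidentally, your density step is unnecessary at that point: $u(1-\phi_n)$ vanishes on a neighborhood of $M\setminus\Omega$ and hence already lies in $C^\infty_0(\Omega)$, so Theorem \ref{Hardy1} applies verbatim.)

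The genuine gap is the step you defer to the end: converting $\int_M F^{*2}(x,D\phi_n)\,\mathrm{d}\mathsf{m}\to 0$ into a.e.\ (or $L^2_{\mathrm{loc}}$) convergence $\phi_n\to 0$ on $\mathrm{supp}\,u$, which your Fatou argument on the left-hand side requires. The definition of capacity gives no control whatsoever on $\int\phi_n^2$ --- only on the gradient energy --- and on a $2$-parabolic manifold the desired implication is simply false (there $\phi_n$ can stay identically $1$ on large sets while the energy tends to zero). Establishing this implication is the actual content of the paper's Section \ref{extension}: Theorem \ref{inegimportanta} proves, by contradiction using the local uniform Poincar\'e inequality of Xia (Theorem \ref{poincare}) together with $2$-hyperbolicity, an $L^1(B_R)$ bound by the global gradient energy; Theorem \ref{troyanov3} upgrades this to $\|v\|_{L^2(K)}\le C\left(\int_M F^{*2}(x,Dv)\,\mathrm{d}\mathsf{m}\right)^{1/2}$ for all $v\in C^\infty_0(M)$ and compact $K$. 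Applied to $v=\phi_n$ with $K=\mathrm{supp}\,u$, this is precisely the ``Poincar\'e--Sobolev machinery'' you invoke. Since essentially all of the difficulty of the corollary is concentrated in that estimate, flagging it as an anticipated obstacle without proving it leaves the proof incomplete.
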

Here $r_F$ and $\mathrm{Ric}_N$ denote the reversibility constant and  weighted Ricci curvature, res\-pectively, while $(M,F)$ is said to be $2$-hyperbolic if there exists a compact set $K \subset M$ with non-empty interior such that $\capac_2 (K,M) > 0$ (for further details see Sections \ref{preliminaries} and \ref{extension}).

The paper is organized as follows. Section \ref{preliminaries} recalls some definitions and results from Finsler geometry.
In Section \ref{hardy} we prove Theorem \ref{Hardy1}, an extension of this result and a Caccioppoli-type inequality. 
Then, as application, we present some Hardy-type \mbox{inequalities} involving the Finslerian distance function on Finsler-Hadamard manifolds.  
Section \ref{generalizations} presents a weighted Gagliardo-Nirenberg inequality and a Heisenberg-Pauli-Weyl uncertainty principle, which imply - as particular cases - Corollaries \ref{GN} and \ref{HPW}, respectively.
Finally, in Section \ref{extension} we introduce the notion of $2$-capacity, and we extend the Hardy inequality \eqref{ineqmain} 
to the class of functions $C^{\infty}_0(M)$ when $(M,F)$ is a $2$-hyperbolic manifold of bounded geometry.

\section{Preliminaries on Finsler geometry}    \label{preliminaries}

\subsection{Finsler manifolds}

Let $M$ be a connected $n$-dimensional smooth manifold and $TM=\bigcup_{x \in M}T_{x} M $ its tangent bundle, where $T_{x} M$ denotes the tangent space of $M$ at the point $x$.

A continuous function $F: TM \to [0,\infty)$ is called a Finsler structure on $M$ if the following conditions hold:
\begin{enumerate}[label=(\roman*)]
	\item $F$ is $C^{\infty}$ on $TM\setminus\{ 0 \}$ ; 
	\item $F(x,\lambda y) = \lambda F(x,y)$ for all $\lambda \geq 0$ and all $(x,y)\in TM$ ;
	\item the $n \times n$ Hessian matrix $\Big( g_{ij}(x,y) \Big) := \left( \left[ \frac{1}{2}F^{2}(x,y)\right] _{y^{i}y^{j}} \right)$
	is positive definite for all $(x,y)\in TM\setminus \{0\}.$
\end{enumerate}
Such a pair $(M,F)$ is called a Finsler manifold.

If, in addition, $F(x,\lambda y) = |\lambda| F(x,y)$ holds for all $\lambda \in \mathbb{R}$ and all $(x,y) \in TM$, then the 
Finsler manifold is called reversible. Otherwise, $(M,F)$ is non-reversible.

Now let $(M,F)$ be a connected $n$-dimensional $C^{\infty}$ Finsler manifold. 

The reversibility constant of $(M,F)$ is defined by the number
\begin{equation} \label{reversibility_constant}
r_{F} = \sup_{x\in M} ~ \sup_{\substack{ y\in T_{x} M \setminus \{0\}}} \frac{F(x,y)}{F(x,-y)},
\end{equation}
and it measures how much the manifold deviates from being reversible (see Rademacher \cite{Rademacher}).  
Note that $r_F \in [1, \infty]$ and $r_F = 1$ if and only if $(M,F)$ is reversible. 
Similarly, one can define the constant $r_{F^*}$ by means of the polar transform $F^*$ (see Section \ref{polar}), 
and one can prove that $r_F = r_{F^*}$.

Unlike the Riemannian metric, the Finsler structure does not induce a unique natural connection on the Finsler manifold $(M, F)$.
However, it is possible to define on the pull-back tangent bundle $\pi^* TM$ a linear, torsion-free and almost metric-compatible connection called the Chern connection, see Bao, Chern and Shen \cite[Chapter~2]{BCS}.   

With the help of the Chern connection, one can define the Chern curvature tensor $\textbf{R}$ and the flag curvature $\textbf{K}$, see Bao, Chern and Shen \cite[Chapter~3]{BCS}. 
For a fixed point $x \in M$ let $y,v \in T_x M$ be two linearly independent tangent vectors.
Then the flag curvature is defined as
$$ \textbf{K}^y(y, v) = \frac{g_y(\textbf{R}(y,v)v,y)}{ g_y(y,y) g_y(v,v) - g_y(y,v)^2} ,$$ 
where $g$ is the fundamental tensor on $\pi^* TM$ induced by the Hessian matrices $(g_{ij})$. 
We say that $(M,F)$ has non-positive flag curvature if $\textbf{K}^y(y,v) \leq 0$ for every $x \in M$ and every choice of $y,v \in T_x M$, and we denote it by $\textbf{K} \leq 0$. The Ricci curvature is defined as
$$ \mathrm{Ric}(y) = F^2(y) \sum_{i=1}^{n-1} \textbf{K}^y(y, e_i),$$
where $\{ e_1, \cdots ,e_{n-1}, \frac{y}{F(y)} \}$ is an orthonormal basis of $T_xM$ with respect to $g_y$.

The Chern connection also induces the notion of covariant derivative and  parallelism of a vector field along a curve. A curve $\gamma: [a,b] \to M$ is called a geodesic if its velocity field $\dot \gamma$ is parallel along the curve, i.e. $D_{\dot \gamma} \dot \gamma = 0$.

$(M,F)$ is said to be forward (or backward, respectively) complete if every geodesic $\gamma: [a,b] \to M$ can be extended to a geodesic defined on $[a, \infty)$ (or $(-\infty, b]$, respectively). In particular, a Finsler manifold is said to be complete if it is forward and backward complete. 
Note that if the reversibility constant $r_F < \infty$, then the forward and backward completeness of $(M,F)$ are equivalent (the proof is based on the Hopf-Rinow theorem, for further details see Bao, Chern and Shen \cite[Section~6.6]{BCS}).

Let $T^*_{x} M$ be the dual space of $T_{x} M$, called the cotangent space of $M$ at the point $x$. Then the union $T^*M=\bigcup_{x \in M}T^*_{x} M $ denotes the cotangent bundle of $M$. For every fixed point $x \in M$ let $\left( \frac{\partial}{\partial x^i} \right)_{i=\overline{1,n}}$ be the canonical basis of the tangent space $T_xM$, and $\left( d x^i \right)_{i=\overline{1,n}}$ be the dual basis of $T^*_xM$, where $(x^i)_{i=\overline{1,n}}$ is a local coordinate system.

In the sequence by $(M,F)$ we mean a Finsler metric measure manifold $(M,F,\mathsf{m})$, i.e. a Finsler manifold $(M,F)$ endowed with a smooth measure $\mathsf{m}$. If the function $x \mapsto \sigma_F(x)$ denotes the density function of $\mathsf{m}$ in a local coordinate system $(x^i)_{i=\overline{1,n}}$, then we can define the volume form 
\begin{equation} \label{Hausdorff_measure}
\mathrm{d}\mathsf{m}(x) = \sigma_F(x) \mathrm{d} x^1 \land \cdots \land \mathrm{d} x^n ,
\end{equation}
which is used throughout the paper.  
The Finslerian volume of a subset $\Omega \subset M$ is defined as $\mathrm{Vol}_F(\Omega) = \int_{\Omega} \mathrm{d}\mathsf{m}$.

The mean distortion of $(M,F)$ is defined by  
$$ \mu: TM \setminus \{0\} \to (0, \infty) , \quad \mu(x,y) = \frac{\sqrt{\text{det}\big(g_{ij}(x,y)\big)}}{\sigma_F(x)} ,$$
while the mean covariation is defined as
$$\textbf{S}: TM \setminus \{0\} \to \mathbb{R}, \quad 
\textbf{S}(x,y) = \frac{d}{dt} \Big( \text{log} ~\mu \big(\gamma(t), \dot \gamma(t) \big) \Big) \Big\rvert_{t=0} ,$$
where $\gamma$ is the geodesic with $\gamma(0) = x$ and $\dot\gamma(0) = y$.
If $\textbf{S}(x,y) = 0$ on all $TM \setminus \{0\}$, then we say that $(M,F)$ has vanishing mean covariation and we denote it by $\textbf{S} = 0$. 

Finally, let us recall the notion of weighted Ricci curvature introduced by Ohta \cite{Ohta}. For further details we also refer to Ohta and Sturm \cite{Ohta-Sturm} and Xia \cite{Xia2}.

Let $x \in M$ be a point, $y \in T_xM$ a unit vector, $\varepsilon > 0$ and   
$\gamma: [-\varepsilon, \varepsilon] \rightarrow M$ be a geodesic such that $\gamma(0) = x$ and $\dot \gamma(0) = y$. 
Then one can write that $\mathsf m = e^{-\Psi} \mathrm d \mathrm{Vol}_{\dot \gamma}$ along $\gamma$, 
where $\mathrm{Vol}_{\dot \gamma}$ is the volume form of the Riemannian metric $g_{\dot \gamma}$ . 
Then the weighted Ricci curvature is defined as
\begin{align*}
\mathrm{Ric}_n(y) &= \left\{
\begin{array}{lll}
\mathrm{Ric}(y) + (\Psi \circ \gamma)''(0) \ & \text{if }~ (\Psi \circ \gamma)'(0) = 0 ,\\
- \infty  & \text{otherwise};
\end{array}\right. \\
\mathrm{Ric}_N(y) &= \mathrm{Ric}(y) + (\Psi \circ \gamma)''(0) - \frac{(\Psi \circ \gamma)'(0)^2}{N-n}, ~\text{ where } N \in (n, \infty) ;\\
\mathrm{Ric}_{\infty}(y) &= \mathrm{Ric}(y) + (\Psi \circ \gamma)''(0) . 
\end{align*}  
Also, for every $\lambda \geq 0$ and $N \in [n, \infty]$ we define $\mathrm{Ric}_N(\lambda y) = \lambda^2 \mathrm{Ric}_N(y) $.

\subsection{Polar and Legendre transforms} \label{polar}

The dual Finsler metric $F^*$ on $M$ is defined by
\begin{equation*}  
F^*: T^*M \to [0, \infty) ~,\quad 
F^*(x,\alpha) = \sup \{ \alpha(y) :~ y\in T_xM, F(x,y) = 1 \},
\end{equation*}
and it is called the polar transform of $F$.

Since $F^{*2}(x,\cdot)$ is twice differentiable on $T_x^*M\setminus \{0\}$, one can define the Hessian matrix 
$\Big( g^*_{ij}(x,\alpha) \Big) = \left( \left[ \frac{1}{2}F^{*2}(x, \alpha)\right] _{\alpha^{i}\alpha^{j}} \right)$
for every $\alpha = \sum_{i=1}^n \alpha^i dx^i \in T_x^*M\setminus \{0\}$.

Using the strict convexity of the functions $F(x,\cdot), x \in M$, the Legendre transform $J^*:T^*M \to TM$ is defined in the following way. For each $x \in M$, one can assign to every $\alpha \in T_x^*M$ the unique maximizer $ y \in T_xM $ of the mapping 
$$ y ~ ~ \mapsto ~ ~ \alpha(y) - \frac{1}{2}F^2(x,y) .$$ 
Note that if $ J^*(x, \alpha) = (x,y) $, then
\begin{equation}  \label{Legendre_transform}
F(x,y) = F^*(x,\alpha) \quad \text{ and } \quad \alpha(y) = F^*(x,\alpha) F(x,y).
\end{equation}
One can also define the function $J: TM \to T^*M$ in a similar manner and it can be proven that $J^* = J^{-1}$.
Further properties of the Legendre transform can be found, for instance, in Bao, Chern and Shen \cite[Section~14.8]{BCS}, and Ohta and Sturm \cite{Ohta-Sturm}, we just mention the following result.

For every $\alpha = \sum_{i=1}^n \alpha^i dx^i \in T_x^*M$ and every $y = \sum_{i=1}^n y^i \frac{\partial}{\partial x^i} \in T_xM$ we have that
\begin{equation*}  
J^*(x,\alpha) = \sum_{i=1}^n \frac{\partial}{\partial \alpha_i}\left(\frac{1}{2} F^{*2}(x,\alpha)\right)\frac{\partial}{\partial x^i}  
 \text{ and } 
J(x, y) = \sum_{i=1}^n \frac{\partial}{\partial y_i}\left(\frac{1}{2} F^{2}(x,y)\right) dx^i . 
\end{equation*}

\subsection{Gradient vector, Finsler-Laplacian, Sobolev spaces}

Let $u: M \to \mathbb{R}$ be a weakly differentiable function. 
Then for every regular point $x \in M$, $Du(x) \in T_x^*M$ denotes the differential of $u$ at $x$, while the gradient of $u$ at $x$ is defined as
\begin{equation*}  
\boldsymbol{\nabla} u(x) = J^*(x, Du(x)).
\end{equation*}
Note that by relation \eqref{Legendre_transform}, we have
\begin{equation}  \label{metric-co-metric2}
F^*(x, Du(x)) = F(x, \boldsymbol{\nabla} u(x)).
\end{equation}
Using local coordinates, it follows that 
\begin{equation}  \label{gradient_local}
Du(x) = \sum_{i=1}^n \frac{\partial u}{\partial x^i}(x) dx^i 
\quad \text{and} \quad 
\boldsymbol{\nabla} u(x)=\sum_{i,j=1}^n g_{ij}^*(x,Du(x))\frac{\partial u}{\partial x^i}(x)\frac{\partial}{\partial x^j}.
\end{equation}
Therefore, the nonlinearity of the Legendre transform induces the nonlinearity of the gradient operator $\boldsymbol{\nabla}$.

In order to define the Sobolev spaces on the Finsler manifold $(M,F)$, we use the volume form $\mathrm{d}\mathsf{m}$ defined in \eqref{Hausdorff_measure}.
Let $\Omega$ be an open subset of $M$.

The spaces $L^p_{\mathrm{\mathrm{loc}}}(\Omega)$ and $W_{\mathrm{\mathrm{loc}}}^{1,p}(\Omega), p \in [1, \infty]$ are defined in a natural manner, independent of the Finsler structure $F$ and the measure $\mathsf{m}$.
The Sobolev spaces on $(M,F)$, however, are determined by the choices of $F$ and $\mathsf{m}$, i.e.
\begin{equation*}
W_F^{1,2}(\Omega) = 
\left\{ u\in W_{\mathrm{\mathrm{loc}}}^{1,2}(\Omega) ~:~
\int_{\Omega}F^{* 2}(x,Du(x)) ~\mathrm{d}\mathsf{m} < \infty \right\} ,
\end{equation*}
and $W_{0,F}^{1,2}(\Omega)$ is the closure of $C_{0}^{\infty
}(\Omega)$ with respect to the norm
\begin{equation*}
\Vert u\Vert _{W^{1,2}_F(\Omega)} = \left( \int_{\Omega} u^2(x)~\mathrm{d}\mathsf{m}\right)^{\frac{1}{2}} + \left( \int_{\Omega}F^{\ast 2}(x,Du(x))~\mathrm{d}\mathsf{m}\right)^{\frac{1}{2}} ,
\end{equation*}
see Ohta and Sturm \cite{Ohta-Sturm}. In the following we may omit the parameter $x$ for the simplicity of the notation.

Let $X$ be a weakly differentiable vector field on $\Omega$. The divergence of $X$ is defined in a distributional sense, i.e. $\mathrm{div} X: \Omega \to \mathbb{R}$ such that 
\begin{equation}  \label{Greendiv}
\int_{\Omega} \varphi \mathrm{div} X ~{\text d}{\mathsf m} = 
-\int_{\Omega} D\varphi (X) ~\mathrm{d}\mathsf{m} ,
\end{equation}
for every $\varphi \in C^{\infty}_0(\Omega)$, see Ohta and Sturm \cite{Ohta-Sturm}. 
We say that $X \in L_{\mathrm{loc}}^1(\Omega)$ if the function $F(X) \in L_{\mathrm{loc}}^1(\Omega)$. 

The Finsler-Laplace operator $\boldsymbol{\Delta}$ is defined in a distributional sense as
$\boldsymbol{\Delta} u = \mathrm{div}(\boldsymbol{\nabla} u)$
for every function $u \in W^{1,2}_{\mathrm{\mathrm{loc}}}(\Omega)$, i.e. 
\begin{equation*} 
\int_{\Omega} \varphi \boldsymbol{\Delta} u ~\mathrm{d}\mathsf{m} = 
-\int_{\Omega} D\varphi (\boldsymbol{\nabla} u) ~\mathrm{d}\mathsf{m},
\end{equation*}
for all $\varphi \in C^{\infty}_0(\Omega)$. 
Note that in general, the Finsler-Laplace operator $\boldsymbol{\Delta}$ is a nonlinear operator.

Now let $X \in L_{\mathrm{loc}}^1(\Omega)$ be a vector field and $f \in L_{\mathrm{loc}}^1(\Omega)$ a function. We say that 
$f \leq -\mathrm{div} X$ if the inequality holds in the distributional sense, i.e. 
\begin{equation*} 
\int_{\Omega} \varphi f ~\mathrm{d}\mathsf{m} \leq 
-\int_{\Omega} \varphi \mathrm{div} X ~\mathrm{d}\mathsf{m} = 
\int_{\Omega} D\varphi (X) ~\mathrm{d}\mathsf{m},
\end{equation*}
for every nonnegative $\varphi \in C_0^\infty(\Omega)$.

Finally, we say that a function $u \in  W^{1,2}_{\mathrm{loc}}(\Omega)$ is superharmonic in weak sense if 
\begin{equation} \label{superharmonic} 
-\boldsymbol{\Delta} u \geq 0 ~\text{ on }~ \Omega ,
\end{equation}
meaning that 
\begin{equation}\label{superharmonicdef}
\int_\Omega D \varphi(\boldsymbol{\nabla}u) ~\mathrm{d}\mathsf{m} \geq 0 ,
\end{equation} 
for every nonnegative function $\varphi \in C_0^\infty(\Omega)$.

\subsection{Distance function}

Let $\gamma: [a,b] \to M$ be a piecewise differentiable curve. We define the length of the segment $\gamma \big\rvert_{[a,b]}$ as
$$ L_F(\gamma) = \int_a^b F( \gamma(t), \dot \gamma(t)) ~ dt . $$

The distance function $d_F: M \times M \to [0, \infty)$ is defined on $(M,F)$ by
$$ d_F(x_1, x_2)  = \inf_\gamma  L_F(\gamma) ,$$
where the infimum is taken over the set of all piecewise differentiable curves $\gamma: [a,b] \to M$  
such that $\gamma(a) = x_1$ and $\gamma(b) = x_2$.
It can be proven that $d_F(x_1, x_2) = 0$ if and only if $x_1 = x_2$ and that $d_F$ verifies the triangle inequality.
However, in general, the distance function is not symmetric. In fact, we have that $d_F(x_1, x_2) = d_F(x_2, x_1), $ for all $x_1, x_2 \in M$ if and only if $(M,F)$ is a reversible Finsler manifold.

The open forward and backward geodesic balls of center $x_0 \in M$ and radius $R > 0$ are defined by
$$B^+_R(x_0) = \{ x \in M : d_F(x_0, x) < R \} \text{ and } B^-_R(x_0) = \{ x \in M : d_F(x, x_0) < R \}, $$ respectively.

For any fixed point $x_0 \in M$, one can introduce the distance function 
$r: M \to [0, \infty)$, $r(x) = d_F(x_0, x) $, for all $x \in M$.
Then, by Shen \cite[Lemma~3.2.3]{Shen1}, we have that 
\begin{equation}\label{eikonal}
F(x, \boldsymbol{\nabla}r(x)) = F^*(x, Dr(x)) = Dr(x)(\boldsymbol{\nabla}r(x)) = 1 ,  
\end{equation}   
for every $x \in M \setminus (\{x_0\} \cup \text{Cut}(x_0))$,
where $\text{Cut}(x_0)$ denotes the cut locus of the point $x_0$, see Bao, Chern and Shen \cite[\mbox{Chapter} 8]{BCS}.

Furthermore, we have the following comparison theorem for the Finsler-Laplacian of the distance function $r$, see Wu and Xin \cite{WuXin}.

\begin{thm}{\emph{(Laplacian comparison theorem \cite[Theorem 5.1]{WuXin})}} \label{comparison-laplace}
	Let $(M,F)$ be an $n$-dimensional Finsler-manifold with $\mathbf{S}=0$, and suppose that the flag curvature of $M$ is bounded from above, i.e. $\textbf{K} \leq c$, $c \in \mathbb{R}$. Let $r = d_F(x_0, \cdot)$ be the distance function from a fixed point $x_0 \in M$. Then
	$$\boldsymbol{\Delta}r \geq (n-1) {\bf ct}_c(r) $$ 
	for every point $x \in M \setminus (\{x_0\} \cup \emph{Cut}(x_0))$,
	where ${\bf ct}_{c}: (0, \infty) \to \mathbb{R}$, 
	$${\bf ct}_{c}(t) = \left\{
	\begin{array}{lll}
	\sqrt{c} \cot(\sqrt{c}t)  & \hbox{if} & {c} > 0, \\
	\frac{1}{t}                & \hbox{if} & {c} = 0, \\
	\sqrt{-c}\coth(\sqrt{-c}t) & \hbox{if} & {c} < 0.
	\end{array}\right.$$
\end{thm}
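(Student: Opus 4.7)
The plan is to mirror the classical Riemannian argument, carrying it out along the unique minimizing geodesic from $x_0$ to $x$, using the osculating Riemannian metric $g_{\dot\gamma}$ to perform a Jacobi-field (Rauch-type) comparison, and invoking the hypothesis $\textbf{S}=0$ to identify the Finsler-Laplacian of $r$ with the Riemannian trace of its Hessian.

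Fix $x \in M \setminus (\{x_0\} \cup \mathrm{Cut}(x_0))$. There is a unique minimizing unit-speed geodesic $\gamma: [0, r(x)] \to M$ joining $x_0$ to $x$, and $r$ is smooth on a neighborhood of $x$. By the eikonal identity \eqref{eikonal} the gradient of $r$ coincides with the geodesic field, $\boldsymbol{\nabla} r(\gamma(t)) = \dot\gamma(t)$. On the $g_{\dot\gamma(t)}$-orthogonal complement of $\dot\gamma(t)$, the Hessian of $r$ taken with respect to the Chern connection with reference vector $\dot\gamma$ agrees with the shape operator of the level set $\{r=t\}$, and this shape operator is determined by the Jacobi fields $J$ along $\gamma$ with $J(0)=0$. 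Invoking the Finsler Jacobi equation together with the flag-curvature bound $\textbf{K}\leq c$, a Rauch-type comparison yields
$$|J(t)|_{g_{\dot\gamma}} \geq s_c(t)\,|J'(0)|_{g_{\dot\gamma(0)}},$$
where $s_c$ is the standard constant-curvature-$c$ sine (so $s_c'/s_c = {\bf ct}_c$). Tracing this estimate over an orthonormal basis of the $(n-1)$-dimensional orthogonal complement produces
$$\mathrm{tr}_{g_{\dot\gamma}}(\mathrm{Hess}\,r)(\gamma(t)) \geq (n-1)\,{\bf ct}_c(t).$$

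To conclude I would connect this Riemannian trace back to the Finsler-Laplacian. A direct calculation with the definition $\boldsymbol{\Delta} r = \mathrm{div}(\boldsymbol{\nabla} r)$, the volume form \eqref{Hausdorff_measure}, and the definition of the mean distortion $\mu$ shows that along $\gamma$
$$\boldsymbol{\Delta} r(\gamma(t)) = \mathrm{tr}_{g_{\dot\gamma}}(\mathrm{Hess}\,r)(\gamma(t)) - \textbf{S}(\gamma(t),\dot\gamma(t)),$$
whose correction term is exactly the mean covariation. Under $\textbf{S}=0$ this correction vanishes and we obtain $\boldsymbol{\Delta} r(x) \geq (n-1)\,{\bf ct}_c(r(x))$, as required. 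I expect the main obstacle to be the Finslerian Rauch comparison: the Chern connection depends on the reference vector $\dot\gamma$ and is only almost metric-compatible, so the Riemannian proof does not transcribe verbatim. One has to work carefully along the single geodesic $\gamma$, where $g_{\dot\gamma}$ plays the role of an honest Riemannian metric and the flag curvature $\textbf{K}^{\dot\gamma}$ replaces the sectional curvature in the usual index-form argument. The identification of the $\textbf{S}$-correction in the last step is the second delicate ingredient, and is precisely what makes $\textbf{S}=0$ the natural hypothesis reducing the Finsler Laplacian comparison to its Riemannian counterpart.
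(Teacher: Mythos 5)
The paper does not prove this statement at all: it is quoted verbatim as an external result, namely Theorem 5.1 of Wu and Xin \cite{WuXin}, so there is no internal proof to compare against. Your outline is nonetheless the correct standard argument and is essentially the one carried out in the cited source: a Finslerian Hessian (index-form/Rauch) comparison along the unique minimizing geodesic, using the osculating metric $g_{\dot\gamma}$ and the flag curvature $\textbf{K}^{\dot\gamma}$ in place of sectional curvature, followed by the decomposition $\boldsymbol{\Delta} r = \mathrm{tr}_{g_{\boldsymbol{\nabla} r}}(\mathrm{Hess}\, r) - \textbf{S}(\boldsymbol{\nabla} r)$, which under $\textbf{S}=0$ reduces the Finsler Laplacian comparison to the trace estimate. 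You correctly flag the two genuinely delicate points (the reference-vector dependence of the Chern connection in the Jacobi-field comparison, and the identification of the $\textbf{S}$-correction term), but you only sketch them; a self-contained proof would have to supply the index lemma in the Finsler setting and the computation of $\mathrm{div}_{\mathsf m}$ against the volume form \eqref{Hausdorff_measure} via the mean distortion $\mu$. As a proposal it is sound and faithful to the argument the paper delegates to \cite{WuXin}.
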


\section{Hardy inequalities for superharmonic weight functions on Finsler manifolds} \label{hardy}

In the following let $(M, F)$ be a forward complete $n$-dimensional Finsler manifold and let $\Omega \subset M$ be an open set. 

Inspired by D'Ambrosio and Dipierro \cite{DAmbrosio}, in this section we consider a nonnegative weight function $\rho \in W_{\mathrm{loc}}^{1,2}(\Omega)$, 
and we prove that the superharmo\-nicity of $\rho$ provides a sufficient condition to obtain weighted Hardy inequalities on the Finsler manifold $(M, F)$.

In order to be more self-contained, we state the following lemma, which is crucial for the proof of Theorem \ref{Hardy1} and \ref{Hardy2}. 
The technique of the proof is analogous to the result of Zhao \cite[Theorem 3.1]{Zhao}, 
and is based on relations \eqref{metric-co-metric2}, \eqref{Greendiv} and the H\"{o}lder inequality.

\begin{lem}\label{basiclemma}
	Let $X \in L_{\mathrm{loc}}^1(\Omega)$ be a vector field and $f_X \in L_{\mathrm{loc}}^1(\Omega)$ a nonnegative function such that the following properties hold: 
	\begin{enumerate}[label=(\roman*)]
		\item $f_X \leq -\mathrm{div}X$;
		\item $\frac{F^2(X)}{f_X} \in L_{\mathrm{loc}}^1(\Omega)$.
	\end{enumerate}
	Then every function $u\in C_0^\infty(\Omega)$ satisfies
	\begin{equation}\label{basiclemmaineq1}
	\int_{\Omega} u^2 f_X ~\mathrm{d}\mathsf{m} \leq 
	4 \int_{\Omega} \frac{F^2(x, X)}{f_X}F^2(x, \boldsymbol{\nabla}u) ~\mathrm{d}\mathsf{m} .
	\end{equation}
\end{lem}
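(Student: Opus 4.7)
The strategy is to use $\varphi = u^2$ as a nonnegative test function in the distributional inequality $f_X \leq -\mathrm{div}\, X$ coming from hypothesis (i). Since $u \in C_0^\infty(\Omega)$ we have $u^2 \in C_0^\infty(\Omega)$, $u^2 \geq 0$, and $D(u^2) = 2u\, Du$, so inserting $\varphi = u^2$ into the distributional formulation gives
\[
\int_\Omega u^2 f_X \, \mathrm{d}\mathsf{m} \leq \int_\Omega D(u^2)(X) \, \mathrm{d}\mathsf{m} = 2 \int_\Omega u\, Du(X) \, \mathrm{d}\mathsf{m}.
\]
All integrals are finite because $u$ has compact support in $\Omega$, $f_X \in L^1_{\mathrm{loc}}(\Omega)$, $X \in L^1_{\mathrm{loc}}(\Omega)$, and the right-hand side is further controlled via hypothesis (ii).

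Next I apply the Fenchel-type inequality $\alpha(v) \leq F^*(x,\alpha)\, F(x,v)$, valid for every covector $\alpha \in T^*_x M$ and every tangent vector $v \in T_x M$, pointwise with $\alpha = Du$ and $v = X$. Combined with the positive homogeneity of $F^*$ and the identity $F^*(Du) = F(\boldsymbol{\nabla}u)$ from \eqref{metric-co-metric2}, this produces the pointwise bound $u\, Du(X) \leq |u|\, F(\boldsymbol{\nabla}u)\, F(X)$. Substituting and splitting the integrand as
\[
|u|\, F(\boldsymbol{\nabla}u)\, F(X) = \bigl(|u|\sqrt{f_X}\bigr) \cdot \Bigl(\frac{F(X)}{\sqrt{f_X}}\, F(\boldsymbol{\nabla}u)\Bigr),
\]
the Cauchy--Schwarz inequality yields
\[
\int_\Omega u^2 f_X \, \mathrm{d}\mathsf{m} \leq 2 \Bigl(\int_\Omega u^2 f_X \, \mathrm{d}\mathsf{m}\Bigr)^{1/2} \Bigl(\int_\Omega \frac{F^2(X)}{f_X}\, F^2(\boldsymbol{\nabla}u) \, \mathrm{d}\mathsf{m}\Bigr)^{1/2}.
\]
If the left-hand side vanishes the conclusion is trivial; otherwise, dividing by $\bigl(\int_\Omega u^2 f_X\, \mathrm{d}\mathsf{m}\bigr)^{1/2}$, which is finite since $u \in C_0^\infty(\Omega)$ and $f_X \in L^1_{\mathrm{loc}}(\Omega)$, and squaring yields the inequality \eqref{basiclemmaineq1}.

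The delicate point is the pointwise bound $u\, Du(X) \leq |u|\, F(\boldsymbol{\nabla}u)\, F(X)$. When $u \geq 0$ it is an immediate consequence of the Fenchel inequality multiplied by $u$; for sign-changing $u$, however, one needs $F^*$ evaluated on $\mathrm{sgn}(u)\, Du$, and in the non-reversible setting $F^*(-Du)$ does not in general coincide with $F^*(Du)$. Following Zhao \cite[Theorem~3.1]{Zhao}, this subtlety is circumvented by applying the Fenchel inequality directly to $D(u^2) = 2u\, Du$ and exploiting the positive homogeneity $F^*(\lambda \alpha) = \lambda F^*(\alpha)$ for $\lambda \geq 0$, which absorbs the sign of $u$ into the factor $|u|$ without forcing the reversibility constant to appear.
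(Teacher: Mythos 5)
Your overall route --- testing the distributional inequality $f_X \leq -\mathrm{div}X$ with $\varphi = u^2$, then Fenchel duality together with $F^*(Du)=F(\boldsymbol{\nabla}u)$ and Cauchy--Schwarz --- is exactly the route the paper indicates (the paper only sketches this proof, deferring to Zhao's Theorem 3.1). The problem is the step you yourself single out as delicate. Positive homogeneity of $F^*$ only lets you extract \emph{nonnegative} scalars, so
\begin{equation*}
F^*\bigl(x,D(u^2)\bigr)=F^*\bigl(x,2u\,Du\bigr)=2|u|\,F^*\bigl(x,\mathrm{sgn}(u)\,Du\bigr),
\end{equation*}
which on the set $\{u<0\}$ equals $2|u|\,F^*(x,-Du)$, not $2|u|\,F^*(x,Du)$. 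Hence the pointwise bound $u\,Du(X)\leq |u|\,F(x,\boldsymbol{\nabla}u)\,F(x,X)$ that you feed into Cauchy--Schwarz is false in general on $\{u<0\}$ when $F$ is non-reversible: the sign of $u$ is \emph{not} absorbed harmlessly into the factor $|u|$; it lands on the argument of $F^*$. The mechanism you invoke to circumvent the difficulty therefore does not do what you claim.

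What your argument actually proves, verbatim, is
\begin{equation*}
\int_{\Omega} u^2 f_X ~\mathrm{d}\mathsf{m} \leq 4\int_{\Omega} \frac{F^2(x,X)}{f_X}\,F^{*2}\bigl(x,\mathrm{sgn}(u)Du\bigr)~\mathrm{d}\mathsf{m}
= 4\int_{\Omega} \frac{F^2(x,X)}{f_X}\,F^{2}\bigl(x,\boldsymbol{\nabla}|u|\bigr)~\mathrm{d}\mathsf{m},
\end{equation*}
and $F(x,\boldsymbol{\nabla}|u|)$ coincides with $F(x,\boldsymbol{\nabla}u)$ only where $u\geq 0$, or when $F$ is reversible. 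To reach the stated right-hand side one must either restrict to nonnegative $u$, assume reversibility, or pay a factor $r_F^2$ via $F^*(x,-Du)\leq r_F\,F^*(x,Du)$ --- compare how the paper's Theorem \ref{Hardy2} is forced to introduce $r_F$ precisely when the analogous sign problem appears in the vector field $X$. So the gap is not in your strategy, which matches the intended proof, but in the justification of the single pointwise inequality on which everything rests; as written it does not establish the lemma for sign-changing $u$ on a non-reversible manifold.
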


By carefully choosing the vector field $X$ and the function $f_X$, 
we can deduce the \mbox{following} weighted Hardy inequality.  
Note that by introducing the reversibility constant $r_F$, we obtain the quantitative analogue of Zhao \cite[Theorem 4.1]{Zhao}.

\begin{thm}\label{Hardy2}
	Let $\rho \in W_{\mathrm{loc}}^{1,2}(\Omega)$ be a nonnegative function and $\theta \in \mathbb{R}$ a constant with the following properties:
	\begin{enumerate}[label=(\roman*)]
		\item \label{superalf} $-(1-\theta) \boldsymbol{\Delta}\rho \geq 0$ on $\Omega$ in weak sense;
		\item \label{loc1} $\frac{F^{*2}(D\rho)}{\rho^{2-\theta}}, ~\rho^{\theta}\in L^1_{\mathrm{loc}}(\Omega).$
	\end{enumerate}
	If $\theta \leq 1$, then 
	\begin{equation*}
	\frac{(1-\theta)^2}{4} \int_{\Omega} \rho^{\theta}\frac{u^2}{\rho^2}F^{* 2}(x, D\rho) ~\mathrm{d}\mathsf{m} 
	\leq \int_{\Omega}\rho^{\theta}F^{2}(x, \boldsymbol{\nabla}u) ~\mathrm{d}\mathsf{m} , 
	\quad \forall u \in C_0^{\infty}(\Omega),
	\end{equation*}
	whereas if $\theta > 1$ and $r_F < \infty$, $r_F$ being the reversibility constant of $(M,F)$, then 
	\begin{equation*}
	\frac{(1-\theta)^2}{4 r_F^2} \int_{\Omega} \rho^{\theta}\frac{u^2}{\rho^2}F^{* 2}(x, D\rho) ~\mathrm{d}\mathsf{m} 
	\leq \int_{\Omega}\rho^{\theta}F^{2}(x, \boldsymbol{\nabla}u) ~\mathrm{d}\mathsf{m} , 
	\quad  \forall u \in C_0^{\infty}(\Omega).
	\end{equation*}
\end{thm}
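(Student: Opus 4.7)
The strategy is to apply Lemma \ref{basiclemma} with a vector field and weight chosen to reproduce precisely the two sides of the claimed inequality. The case $\theta=1$ being trivial, assume $\theta\neq 1$ and set
\begin{equation*}
 X := \mathrm{sgn}(1-\theta)\,\rho^{\theta-1}\boldsymbol{\nabla}\rho, \qquad f_X := |1-\theta|\,\rho^{\theta-2}F^{*2}(x, D\rho).
\end{equation*}
Hypothesis \ref{loc1} immediately gives $f_X\in L^1_{\mathrm{loc}}(\Omega)$, and Cauchy--Schwarz combined with \ref{loc1} (applied to the factorization $\rho^{\theta-1}F^*(D\rho)=\rho^{\theta/2}\cdot\rho^{(\theta-2)/2}F^*(D\rho)$) shows that $F(X)\in L^1_{\mathrm{loc}}(\Omega)$, so $X\in L^1_{\mathrm{loc}}(\Omega)$.

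The central step is to verify $-\mathrm{div}(X)\geq f_X$ weakly. A distributional product rule for $\mathrm{div}(\phi\boldsymbol{\nabla}\rho)$ with $\phi=\mathrm{sgn}(1-\theta)\rho^{\theta-1}$, combined with $D\rho(\boldsymbol{\nabla}\rho)=F^{*2}(x,D\rho)$ (a consequence of \eqref{Legendre_transform} and \eqref{metric-co-metric2}), yields
\begin{equation*}
 -\mathrm{div}(X)=-\mathrm{sgn}(1-\theta)\,\rho^{\theta-1}\boldsymbol{\Delta}\rho+|1-\theta|\,\rho^{\theta-2}F^{*2}(x,D\rho).
\end{equation*}
Hypothesis \ref{superalf} is equivalent to $-\mathrm{sgn}(1-\theta)\boldsymbol{\Delta}\rho\geq 0$ weakly, and multiplying this distributional inequality by the nonnegative weight $\rho^{\theta-1}$ shows that the first term on the right is distributionally nonnegative, so $-\mathrm{div}(X)\geq f_X$.

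It remains to control $F^2(X)/f_X$, which is where the sign of $1-\theta$, and therefore the reversibility constant, enters. If $\theta<1$ the positive homogeneity of $F$ yields $F(X)=\rho^{\theta-1}F^*(x,D\rho)$ exactly, hence $F^2(X)/f_X=\rho^\theta/(1-\theta)\in L^1_{\mathrm{loc}}(\Omega)$. If $\theta>1$ the scalar factor in $X$ is negative, so $F(X)=\rho^{\theta-1}F(-\boldsymbol{\nabla}\rho)$; the elementary bound $F(-\boldsymbol{\nabla}\rho)\leq r_F F(\boldsymbol{\nabla}\rho)=r_F F^*(x,D\rho)$, which follows directly from \eqref{reversibility_constant}, gives $F^2(X)/f_X\leq r_F^2\rho^\theta/(\theta-1)\in L^1_{\mathrm{loc}}(\Omega)$ under the assumption $r_F<\infty$. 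Applying Lemma \ref{basiclemma} and multiplying through by $|1-\theta|/4$ in the first case, respectively $|1-\theta|/(4r_F^2)$ in the second, yields the stated inequalities upon noting that $(1-\theta)^2=(\theta-1)^2$.

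The main obstacle is the rigorous justification of both the distributional product rule for $\mathrm{div}(\rho^{\theta-1}\boldsymbol{\nabla}\rho)$ and of multiplying the distributional inequality in \ref{superalf} by the non-smooth, possibly singular weight $\rho^{\theta-1}$. I would handle this by the standard regularization $\rho\rightsquigarrow \rho+\varepsilon$ with $\varepsilon>0$, combined with a mollification in $\rho$ where required, and then pass to the limit $\varepsilon\downarrow 0$; the integrability built into \ref{loc1} is exactly what is needed to invoke dominated convergence on all the relevant terms.
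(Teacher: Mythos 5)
Your proposal is correct and follows essentially the same route as the paper: apply Lemma \ref{basiclemma} with $X$ proportional to $\rho^{\theta-1}\boldsymbol{\nabla}\rho$ and $f_X$ proportional to $\rho^{\theta-2}F^{*2}(D\rho)$, compute $F^2(X)/f_X=\rho^{\theta}$ (up to a constant) via the homogeneity of $F$, and invoke the reversibility constant exactly when $\theta>1$ flips the sign of the scalar factor; your normalization of $X$ and $f_X$ differs from the paper's only by the harmless factor $|1-\theta|$. The regularization $\rho\rightsquigarrow\rho+\varepsilon$ you defer to the end is precisely the paper's substitution $\rho_\alpha=\rho+\alpha$, and the divergence inequality $f_X\le-\mathrm{div}\,X$ that you outline is the one step the paper itself delegates to Zhao's Theorem 4.1 rather than proving.
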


\begin{proof}	
	The proof is based on the application of Lemma \ref{basiclemma}. Notice that the case $\theta = 1$ is trivial.
	
	Let $\alpha \in (0,1)$, $\rho_{\alpha} = \rho + \alpha > 0$ on $\Omega$, and define the vector field $X$ and the function $f_X$ on $\Omega$ as	
	\begin{equation} \label{X_and_f_X}
	X = (1-\theta) \frac{\boldsymbol{\nabla}\rho_{\alpha}}{\rho_{\alpha}^{1-\theta}} \quad \text{and} \quad 
	f_X = (1-\theta)^2\frac{F^{* 2}(D\rho_{\alpha})}{\rho_{\alpha}^{2-\theta}}.
	\end{equation}	
	Since $\rho^{\theta} \in L^1_{\mathrm{loc}}(\Omega)$, $\frac{1}{\rho_\alpha} \leq \frac{1}{\alpha}$ and $D\rho_\alpha = D\rho$, we have that 
	$X$ and $f_X$ $\in L_{\mathrm{loc}}^1(\Omega)$. 
	Also, by direct calculations we obtain 
	$$\frac{F^2(x, X)}{f_X} = \rho_{\alpha}^\theta 
	\frac{ F^2(x, (1-\theta) \boldsymbol{\nabla} \rho_{\alpha})}{ (1-\theta)^2 F^{*2}(x, D\rho_{\alpha})} .$$
	If $\theta < 1$, then we can write 
	\begin{equation} \label{fraction1}
	\frac{F^2(x, X)}{f_X} = \rho_{\alpha}^\theta 
	\frac{ (1-\theta)^2 F^2(x, \boldsymbol{\nabla} \rho_{\alpha})}{ (1-\theta)^2 F^{*2}(x, D\rho_{\alpha})} 
	= \rho_{\alpha}^\theta \in L_{\mathrm{loc}}^1(\Omega).
	\end{equation}
	However, when $\theta > 1$ and $r_F < \infty$, using the definition \eqref{reversibility_constant} of the reversibility constant, we have 
	\begin{equation} \label{fraction2}
	\frac{F^2(x, X)}{f_X} = \rho_{\alpha}^\theta 
	\frac{  F^2(x, (1-\theta) \boldsymbol{\nabla} \rho_{\alpha})}{ (\theta-1)^2 F^{*2}(x, D\rho_{\alpha})} 
	=\rho_{\alpha}^\theta 
	\frac{  F^2(x, -(\theta-1) \boldsymbol{\nabla} \rho_{\alpha})}{ F^{*2}(x, (\theta-1) D\rho_{\alpha})} 
	\leq r_F^2 \rho_{\alpha}^\theta ,
	\end{equation}
	thus $\frac{F^2(X)}{f_X} \in L_{\mathrm{loc}}^1(\Omega)$.	
	It remains to prove that $f_X \leq -\mathrm{div} X$, for which we refer to Zhao \cite[Theorem 4.1]{Zhao}.
\end{proof}

On the one hand, applying Theorem \ref{Hardy2} with the particular case $\theta = 0$ yields Theorem \ref{Hardy1}. 
On the other hand, by choosing $\theta = 2+q, q > -1$, we obtain the following $L^2$-Caccioppoli-type \mbox{inequality}.

\begin{cor}
	Let $(M, F)$ be a complete Finsler manifold with $r_F < \infty$, and let $\Omega \subset M$ be an open set. 
	Let $\rho \in W_{\mathrm{loc}}^{1,2}(\Omega)$ be a nonnegative function such that $\boldsymbol{\Delta}\rho \geq 0$ on $\Omega$ in weak sense. 
	If $q > -1$ such that $\rho^q F^{*2}(D\rho)$ and $\rho^{2+q} \in L^1_{\mathrm{loc}}(\Omega)$, then we have
	\begin{equation} \label{caccioppoli}
	\frac{(1+q)^2}{4 r_F^2} \int_{\Omega} \rho^{q} F^{*2}(x,D\rho) ~ u^2 ~\mathrm{d}\mathsf{m} 
	\leq \int_{\Omega}\rho^{2+q} F^{2}(x, \boldsymbol{\nabla} u) ~\mathrm{d}\mathsf{m} , 
	\quad  \forall u \in C_0^{\infty}(\Omega).
	\end{equation}
\end{cor}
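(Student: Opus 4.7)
The plan is to deduce the Caccioppoli inequality as an immediate specialization of Theorem \ref{Hardy2}, using the parameter choice suggested in the paragraph preceding the statement, namely $\theta = 2+q$. Since $q > -1$, this choice satisfies $\theta > 1$, so we are forced into the branch of Theorem \ref{Hardy2} that requires $r_F < \infty$, which is precisely the extra standing hypothesis imposed in this corollary.

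My first step is to verify the two assumptions of Theorem \ref{Hardy2} for $\theta = 2+q$. For condition \ref{superalf}, observe that $1-\theta = -(1+q)$, so $-(1-\theta)\boldsymbol{\Delta}\rho = (1+q)\boldsymbol{\Delta}\rho$. Because $1+q>0$, the weak inequality $-(1-\theta)\boldsymbol{\Delta}\rho \geq 0$ is equivalent to $\boldsymbol{\Delta}\rho \geq 0$ in weak sense, which is exactly the subharmonicity hypothesis of the corollary. For condition \ref{loc1}, since $2-\theta = -q$, one has $F^{*2}(D\rho)/\rho^{2-\theta} = \rho^q F^{*2}(D\rho)$, and the local integrability of both $\rho^q F^{*2}(D\rho)$ and $\rho^{2+q} = \rho^\theta$ is assumed outright.

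Having verified the hypotheses, I would apply the $\theta > 1$ conclusion of Theorem \ref{Hardy2}, obtaining
\begin{equation*}
\frac{(1-\theta)^2}{4 r_F^2} \int_{\Omega} \rho^{\theta}\frac{u^2}{\rho^2}F^{* 2}(x, D\rho) ~\mathrm{d}\mathsf{m}
\leq \int_{\Omega}\rho^{\theta}F^{2}(x, \boldsymbol{\nabla}u) ~\mathrm{d}\mathsf{m},
\end{equation*}
for all $u \in C_0^\infty(\Omega)$. Substituting $\theta = 2+q$ gives $(1-\theta)^2 = (1+q)^2$, $\rho^\theta/\rho^2 = \rho^q$, and $\rho^\theta = \rho^{2+q}$, which rewrites the inequality in the exact form \eqref{caccioppoli}.

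There is really no genuine obstacle: the whole content of the corollary is the algebraic identification of the hypotheses and the resulting constants under the shift $\theta \mapsto 2+q$. The only point that deserves attention is the sign flip when passing from $-(1-\theta)\boldsymbol{\Delta}\rho \geq 0$ with $\theta > 1$ to the subharmonicity condition $\boldsymbol{\Delta}\rho \geq 0$ (as opposed to the superharmonicity appearing elsewhere in the paper), and the corresponding necessity of invoking the $r_F$-dependent branch rather than the sharper $\theta \leq 1$ branch.
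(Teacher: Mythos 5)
Your proposal is correct and follows exactly the route the paper intends: the corollary is obtained by specializing Theorem \ref{Hardy2} to $\theta = 2+q > 1$, checking that the sign flip turns the hypothesis into subharmonicity and that the integrability conditions match, and then substituting to get the stated constants. This is precisely the paper's (unwritten) proof, so nothing further is needed.
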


Finally, by defining the weight function $\rho$ in Theorem \ref{Hardy2} with the help of the Finslerian distance function $d_F$, 
one can obtain Hardy inequalities on Finsler-Hadamard manifolds having finite reversibility constant.

For this let us consider a Finsler-Hadamard manifold $(M,F)$, 
i.e. $M$ is a simply connected, complete Finsler manifold with non-positive flag curvature $\textbf{K} \leq 0$.
Let $r_F$ and $\mathbf{S}$ denote the reversibility constant and the mean covariation of $(M,F)$, respectively. 

Let $x_0 \in M$ be an arbitrarily fixed point and $r = d_F(x_0, \cdot)$ be the distance function from $x_0$ on $M$.
Note that as $(M,F)$ is a Finsler-Hadamard manifold, we have $\text{Cut}(x_0) = \emptyset$. 

By applying Theorem \ref{Hardy2}, we obtain the following Hardy inequality featuring the reversibility constant $r_F$, 
which can be considered the quantitative version of the result given by Zhao \cite[Theorem 1.2]{Zhao}. 
We sketch the proof for completeness.

\begin{thm} \label{Hadamard1}
	Let $(M,F)$ be an $n$-dimensional Finsler-Hadamard manifold with $n \geq 3$, $r_F < \infty$ and $\mathbf{S}=0$. 
	Let $\alpha \in (-\infty, 1)$, then for every $ u \in C^{\infty}_0(M)$ we have 
	\begin{equation} \label{CartanHad1}
	\frac{(n-2)^2 (1-\alpha)^2}{4 \hspace{0.05cm} r_F^2}  \int_{M} r^{\alpha(2-n)} \frac{u^2}{r^2} ~\mathrm{d} {\mathsf m} 
	~ \leq ~ \int_{M} r^{\alpha(2-n)} F^{2}(x, \boldsymbol{\nabla}u) ~\mathrm{d}\mathsf{m} .
	\end{equation}	
\end{thm}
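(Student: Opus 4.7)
The plan is to apply Theorem~\ref{Hardy2} with the weight $\rho := r^{2-n}$ on the open set $\Omega := M\setminus\{x_0\}$ and with exponent $\theta := \alpha$. This choice is inspired by the Euclidean fundamental solution $|x|^{2-n}$ of the Laplacian, which is the natural candidate for a nonnegative superharmonic weight on a Finsler-Hadamard manifold of dimension $n\geq 3$.

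I would first verify the hypotheses of Theorem~\ref{Hardy2}. Since $\alpha<1$, hypothesis~(i) reduces to showing $-\boldsymbol{\Delta}(r^{2-n})\geq 0$ in the weak sense on $\Omega$. The Laplacian comparison theorem (Theorem~\ref{comparison-laplace}) applied with $c=0$, admissible because $\mathbf{K}\leq 0$ and $\mathbf{S}=0$, gives $\boldsymbol{\Delta}r\geq (n-1)/r$ on $\Omega$. Arguing as in the Riemannian computation, the two terms in the decomposition of $\boldsymbol{\Delta}(r^{2-n})$ cancel exactly by the choice of exponent $2-n$, yielding $\boldsymbol{\Delta}(r^{2-n})\leq 0$ in the weak sense. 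The integrability hypothesis~(ii) is verified by elementary power counting against the volume form (comparable to $r^{n-1}\,dr$ near $x_0$) and holds precisely because $\alpha<1$ and $n\geq 3$.

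Next I would carry out the substitution. Since $D\rho=(2-n)r^{1-n}Dr$ is a negative scalar multiple of $Dr$, and $F^*$ is only positively homogeneous of degree one, the eikonal identity $F^*(Dr)=1$ from~\eqref{eikonal} gives
\begin{equation*}
F^{*2}(D\rho) = (n-2)^2\, r^{2(1-n)}\, F^{*2}(-Dr).
\end{equation*}
Here the reversibility constant enters: by the definition of $r_F$ together with the identity $r_F=r_{F^*}$, one has $F^*(-Dr)\geq F^*(Dr)/r_F = 1/r_F$, hence $F^{*2}(-Dr)\geq r_F^{-2}$. Substituting $\rho^\alpha=r^{\alpha(2-n)}$ and $\rho^{\alpha-2}=r^{(\alpha-2)(2-n)}$ into the left-hand side of Theorem~\ref{Hardy2}, the exponents of $r$ collapse to $\alpha(2-n)-2$; the integrand becomes $(n-2)^2\,r^{\alpha(2-n)-2}\,F^{*2}(-Dr)\,u^2$, while the right-hand side is exactly $\int_M r^{\alpha(2-n)}F^2(x,\boldsymbol{\nabla}u)\,\mathrm{d}\mathsf{m}$. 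Combining with $F^{*2}(-Dr)\geq r_F^{-2}$ yields the claimed inequality for $u\in C_0^\infty(\Omega)$, and a standard density argument (justified because the singleton $\{x_0\}$ has vanishing $2$-capacity for $n\geq 3$) extends it to arbitrary $u\in C_0^\infty(M)$.

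The main obstacle is the rigorous derivation of the weak inequality $\boldsymbol{\Delta}(r^{2-n})\leq 0$. Because $\boldsymbol{\nabla}$ is nonlinear and $r\mapsto r^{2-n}$ is decreasing, the gradient $\boldsymbol{\nabla}(r^{2-n})$ is not simply $(2-n)r^{1-n}\boldsymbol{\nabla}r$ but rather involves $J^*(-Dr)$. Consequently the Finslerian chain rule and the subsequent application of the Laplacian comparison theorem must be handled carefully in the weak formulation, following the non-reversible framework of Zhao~\cite{Zhao}.
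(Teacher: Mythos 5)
Your proposal follows essentially the same route as the paper's proof: the weight $\rho=r^{2-n}$ on $\Omega=M\setminus\{x_0\}$ with $\theta=\alpha$ in Theorem~\ref{Hardy2}, superharmonicity via the Laplacian comparison theorem with $c=0$, the identity $F^{*2}(D\rho)=(n-2)^2r^{2(1-n)}F^{*2}(-Dr)$ together with $F^{*2}(-Dr)\geq r_F^{-2}$, and the final passage from $C_0^\infty(\Omega)$ to $C_0^\infty(M)$. The only cosmetic difference is that you justify the last step by the vanishing $2$-capacity of $\{x_0\}$ while the paper invokes its null measure, and you are, if anything, more explicit than the paper about the care needed with the nonlinear gradient of the decreasing function $r\mapsto r^{2-n}$ (in the actual comparison one has an inequality, not the exact cancellation you mention, but your conclusion $\boldsymbol{\Delta}(r^{2-n})\leq 0$ is the correct one).
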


\begin{proof}
	Let $\Omega = M \setminus \{x_0\}$ be an open set, and define $\rho = r^{2 - n}: \Omega \to [0, \infty)$, where $n = \text{dim} M \geq 3$. 
	We are going to apply Theorem \ref{Hardy2} with the weight function $\rho$.
	
	Clearly, we have $\rho(x) > 0$ for every $x \in \Omega$. 
	Furthermore, by using relations \eqref{reversibility_constant} and \eqref{eikonal} we obtain that 
	\begin{equation} \label{L_1_loc}
	F^{*2}(D\rho) = (n-2)^2 r^{2-2n} F^{*2}(-Dr) \leq 
	(n-2)^2 r_F^2~ r^{2-2n}  ~ \in ~ L^1_{\mathrm{loc}}(\Omega) ,
	\end{equation}
	thus $\rho \in W^{1,2}_{\mathrm{loc}}(\Omega)$. 
	
	Applying relation \eqref{eikonal} yields
	\begin{align*}
	\boldsymbol{\Delta }\rho & = (2-n) ~\mathrm{div} (r^{1-n}\boldsymbol{\nabla} r)\\
	& = (2-n) ~ \big( (1-n) r^{-n} Dr(\boldsymbol{\nabla} r) + r^{1-n} \boldsymbol{\Delta }r \big)  \\
	& = (2-n) ~r^{-n} ( 1-n + r \boldsymbol{\Delta}r ) .
	\end{align*}
	
	From Theorem \ref{comparison-laplace} it follows that $\boldsymbol{\Delta}r \geq \frac{n-1}{r}$ on $\Omega$, thus
	\begin{equation*}
	-(1-\alpha) \boldsymbol{\Delta }\rho = (n-2)(1-\alpha) ~r^{-n}(1-n + r \boldsymbol{\Delta }r) \geq 0 ~ ~ \text{ on }  ~ \Omega.
	\end{equation*}
	
	Similarly to \eqref{L_1_loc}, we can prove that $\frac{F^{*2}(D\rho)}{\rho^{2-\alpha}}$ and 
	$\rho^{\alpha} ~ \in L^1_{\mathrm{loc}}(\Omega)$ , thus we can apply Theorem \ref{Hardy2}, obtaining
	\begin{equation*}
	\frac{(n-2)^2 (1-\alpha)^2}{4}  \int_{\Omega} r^{\alpha(2-n)} \frac{u^2}{r^2} F^{* 2}(x, -D r) ~{\mbox{\text{d}}}{\mathsf m} 
	\leq \int_{\Omega} r^{\alpha(2-n)} F^{2}(x, \boldsymbol{\nabla}u) ~\mathrm{d}\mathsf{m} , 
	\end{equation*}
	for every $u \in C_0^{\infty}(\Omega)$.
	
	Applying the inequality
	\begin{equation}	\label{ineq_r_F} 
	F^{* 2}(x, -Dr) \geq \frac{1}{r_F^2} F^{* 2}(x, Dr) = \frac{1}{r_F^2}  
	\end{equation}
	and noting that the set $\{x_0\}$ has null Lebesgue measure completes the proof.
\end{proof}

By choosing $\alpha = 0$ we recover the Hardy inequality below, which was first obtained by Farkas, Krist\'aly and Varga \cite[Proposition 4.1]{Kristaly1}.

\begin{thm} \label{Hadamard_theorem}
	Let $(M,F)$ be an $n$-dimensional Finsler-Hadamard manifold with $n \geq 3$, $r_F < \infty$ and $\mathbf{S}=0$. 
	Then the Hardy inequality
	\begin{equation} \label{CartanHad2}
	\frac{(n-2)^2 }{4 \hspace{0.05cm} r_F^2}  \int_{M} \frac{u^2}{r^2} ~ \mathrm{d}{\mathsf m} 
	~ \leq ~ \int_{M}  F^{2}(x, \boldsymbol{\nabla}u) ~\mathrm{d}\mathsf{m}  
	\end{equation}	
	holds for every $u \in C_0^{\infty}(M)$.
\end{thm}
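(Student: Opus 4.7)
The proof proposal is to obtain Theorem \ref{Hadamard_theorem} as an immediate specialization of Theorem \ref{Hadamard1}. Since the hypotheses on $(M,F)$ in the two statements coincide ($n$-dimensional Finsler--Hadamard with $n \geq 3$, $r_F < \infty$, $\mathbf{S} = 0$), the only task is to check that the parameter value $\alpha = 0$ is admissible in \eqref{CartanHad1} and then read off the desired inequality.

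First I would verify that $\alpha = 0$ lies in the admissible range $(-\infty,1)$, which is clear. Then I would apply Theorem \ref{Hadamard1} with this choice. The weight $r^{\alpha(2-n)}$ reduces to $r^0 \equiv 1$, the constant $\frac{(n-2)^2(1-\alpha)^2}{4 r_F^2}$ reduces to $\frac{(n-2)^2}{4 r_F^2}$, and so \eqref{CartanHad1} becomes exactly \eqref{CartanHad2}. The conclusion of Theorem \ref{Hadamard1} is stated for $u \in C_0^\infty(M)$, so no extension step in the space of test functions is required.

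The only subtle point worth a sentence is the passage from $\Omega = M \setminus \{x_0\}$ (on which the weight function $\rho = r^{2-n}$ and Theorem \ref{Hardy2} are applied inside the proof of Theorem \ref{Hadamard1}) to integration over $M$. This is justified exactly as in the final line of the proof of Theorem \ref{Hadamard1}: the point $\{x_0\}$ has null Lebesgue measure with respect to $\mathrm{d}\mathsf{m}$, so the integrals over $M$ and over $M \setminus \{x_0\}$ coincide for every $u \in C_0^\infty(M)$.

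There is no real obstacle here; the work has already been done in Theorem \ref{Hadamard1}. If one wished to give a direct proof instead, the natural route would be to invoke Lemma \ref{basiclemma} with $X = (2-n)\, r^{1-n}\, \boldsymbol{\nabla} r$ and $f_X = \frac{(n-2)^2}{r_F^2}\, r^{-n}\, F^{*2}(x, -Dr) \cdot r_F^2/F^{*2}(x,-Dr)$, using the Laplacian comparison $\boldsymbol{\Delta} r \geq (n-1)/r$ from Theorem \ref{comparison-laplace} with $c = 0$ to secure $f_X \leq -\mathrm{div}\, X$, and the reversibility bound \eqref{ineq_r_F} to produce the factor $1/r_F^2$; but this simply re-derives the argument already encoded in Theorem \ref{Hadamard1}.
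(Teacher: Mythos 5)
Your proposal is correct and matches the paper exactly: the paper obtains Theorem \ref{Hadamard_theorem} precisely by setting $\alpha = 0$ in Theorem \ref{Hadamard1}, with the passage from $M\setminus\{x_0\}$ to $M$ already handled there via the null measure of $\{x_0\}$. No further comment is needed.
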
  

Note that if $(M,F)$ is a reversible Finsler manifold, i.e. $r_F = 1$, then the constant $\frac{(n-2)^2 }{4}$ is sharp and never achieved, 
see Farkas, Krist\'aly and Varga \cite{Kristaly1}. 
However, if we let $r_F \rightarrow \infty$, the inequality \eqref{CartanHad2} becomes trivial.

Finally, we have the following logarithmic Hardy inequality. 

\begin{thm}
	Let $(M,F)$ be an $n$-dimensional Finsler-Hadamard manifold with $n \geq 2$, $r_F < \infty$ and $\mathbf{S}=0$, 
	and consider a fixed number $\alpha \in \mathbb{R} \setminus \{1\}$.  
	If $\alpha < 1$ define $\Omega = r^{-1}(0,1)$, while if $\alpha > 1$ let $\Omega = r^{-1}(1,\infty)$. Then we have
	\begin{equation} \label{CartanHad4}
	\frac{(1-\alpha)^2}{4 \hspace{0.05cm} r_F^2} \int_{\Omega} |\log r|^{\alpha}\frac{u^2}{(r \log r)^2} ~\mathrm{d}\mathsf{m}
	\leq \int_{\Omega} |\log r|^{\alpha} F^{2}(x, \boldsymbol{\nabla} u) ~\mathrm{d}\mathsf{m}, ~ \forall  u\in C^{\infty}_0(\Omega).
	\end{equation}
	
	If we set $\alpha = 0$ and $\Omega = r^{-1}([0,1))$, then we have
	\begin{equation} \label{CartanHad3}
	\frac{1}{4 \hspace{0.05cm} r_F^2} \int_{\Omega} \frac{u^2}{(r \log r)^2} ~\mathrm{d}\mathsf{m}
	\leq \int_{\Omega} F^{2}(x, \boldsymbol{\nabla} u) ~\mathrm{d}\mathsf{m}, \quad \forall  u\in C^{\infty}_0(\Omega).
	\end{equation}
\end{thm}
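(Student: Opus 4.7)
The plan is to apply Theorem \ref{Hardy2} twice, once for each sign of $1-\alpha$, using the weight function $\rho := |\log r|$ restricted to the appropriate $\Omega$ and setting $\theta := \alpha$. Since $(M,F)$ is a Finsler-Hadamard manifold with $\mathbf{S}=0$, the Laplacian comparison theorem (Theorem \ref{comparison-laplace} with $c=0$) yields $\boldsymbol{\Delta}r \geq \frac{n-1}{r}$ on $M\setminus\{x_0\}$, which is the key estimate in both cases.

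For the first case $\alpha<1$, $\Omega=r^{-1}(0,1)$, I set $\rho=-\log r>0$. Then $D\rho=-\frac{1}{r}Dr$ and $F^{*2}(D\rho)=\frac{1}{r^2}F^{*2}(x,-Dr)$, which together with $F^*(x,-Dr)\leq r_F$ shows $\rho\in W^{1,2}_{\mathrm{loc}}(\Omega)$ and verifies the local integrabilities in hypothesis \ref{loc1}. Arguing as in the proof of Theorem \ref{Hadamard1}, I would compute
$$\boldsymbol{\Delta}\rho = -\operatorname{div}\!\left(\tfrac{1}{r}\boldsymbol{\nabla}r\right)= \tfrac{1}{r^2}Dr(\boldsymbol{\nabla}r)-\tfrac{1}{r}\boldsymbol{\Delta}r=\tfrac{1}{r^2}-\tfrac{1}{r}\boldsymbol{\Delta}r \leq \tfrac{2-n}{r^2}\leq 0$$
for $n\geq 2$, so $-(1-\alpha)\boldsymbol{\Delta}\rho\geq 0$, verifying hypothesis \ref{superalf}. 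The first case of Theorem \ref{Hardy2} then gives
$$\frac{(1-\alpha)^2}{4}\int_{\Omega}|\log r|^{\alpha}\frac{u^2}{r^2(\log r)^2}F^{*2}(x,-Dr)\,\mathrm{d}\mathsf{m} \leq \int_{\Omega}|\log r|^{\alpha}F^2(x,\boldsymbol{\nabla}u)\,\mathrm{d}\mathsf{m},$$
and the pointwise bound $F^{*2}(x,-Dr)\geq 1/r_F^2$, obtained exactly as in \eqref{ineq_r_F}, completes \eqref{CartanHad4}.

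For the second case $\alpha>1$, $\Omega=r^{-1}(1,\infty)$, I set $\rho=\log r>0$. Here the coefficient $1/r>0$ in $D\rho=\frac{1}{r}Dr$ is positive, so by the positive $1$-homogeneity of $J^*$ one has $\boldsymbol{\nabla}\rho=\frac{1}{r}\boldsymbol{\nabla}r$ without ambiguity, and an analogous computation gives $\boldsymbol{\Delta}\rho \geq \frac{n-2}{r^2}\geq 0$, so $-(1-\alpha)\boldsymbol{\Delta}\rho\geq 0$. Since $F^{*2}(Dr)=1$ by \eqref{eikonal}, $F^{*2}(D\rho)=1/r^2$, and the second case of Theorem \ref{Hardy2} (which uses $r_F<\infty$) directly produces the factor $1/r_F^2$, yielding \eqref{CartanHad4} with no additional bound required. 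Finally, \eqref{CartanHad3} is the specialization $\alpha=0$ of the first case, extended from $r^{-1}(0,1)$ to $r^{-1}([0,1))$ using that $\{x_0\}$ is $\mathsf{m}$-null.

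The main obstacle is justifying the sign of $\boldsymbol{\Delta}\rho$ in the first case in a distributional sense. Because $F$ is non-reversible, $\boldsymbol{\nabla}(-\log r)=J^*(-\frac{1}{r}Dr)$ need not equal $-\frac{1}{r}\boldsymbol{\nabla}r$, since $J^*$ is only positively homogeneous. To make the formal computation above rigorous, I would either mirror the analogous step carried out for $\rho=r^{2-n}$ in the proof of Theorem \ref{Hadamard1}, or regularize by $\rho_\varepsilon:=-\log(r+\varepsilon)$, verify the weak inequality \eqref{superharmonicdef} for $\rho_\varepsilon$ against nonnegative test functions, and pass to the limit $\varepsilon\to 0$ using dominated convergence on compact subsets of $\Omega$.
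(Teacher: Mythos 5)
Your proof follows essentially the same route as the paper's: there the weight is $\rho=(\alpha-1)\log r$ (a positive multiple of your $\pm\log r$), Theorem \ref{Hardy2} is applied with $\theta=\alpha$, the Laplacian comparison gives $\boldsymbol{\Delta}r\geq (n-1)/r$ and hence $-(1-\alpha)\boldsymbol{\Delta}\rho\geq(\alpha-1)^2(n-2)/r^2\geq 0$ for $n\geq 2$, and the factor $1/r_F^2$ enters exactly as you describe (via \eqref{ineq_r_F} when $\alpha<1$, and via the second branch of Theorem \ref{Hardy2} when $\alpha>1$), with the $\alpha=0$ case handled by the nullity of $\{x_0\}$. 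The positive-homogeneity subtlety you flag for $\boldsymbol{\nabla}(-\log r)$ is genuine, but the paper's own proof carries out the same formal computation $\mathrm{div}\bigl(\tfrac{1}{r}\boldsymbol{\nabla}r\bigr)$ without addressing it, so your proposal is not missing any ingredient that the paper supplies.
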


\begin{proof}
	Let $\rho = (\alpha-1) \log r: \Omega \to \mathbb{R}$. Clearly, in both cases $\alpha < 1$ and $\alpha > 1$ we have that $\rho > 0$ on $\Omega$. 
	Moreover, similarly to the proof of Theorem \ref{Hadamard1}, we can prove that $\rho \in W^{1,2}_{\mathrm{loc}}(\Omega)$ 
	and $\frac{F^{*2}(D\rho)}{\rho^{2-\alpha}}$, $\rho^{\alpha} ~ \in L^1_{\mathrm{loc}}(\Omega)$.
	
	Using relation \eqref{eikonal} and Theorem \ref{comparison-laplace}, we obtain that
	\begin{align*}
	-(1-\alpha)\boldsymbol{\Delta }\rho & = (\alpha -1) \mathrm{div} (\boldsymbol{\nabla }\rho)  \\
	& = (\alpha -1 )^2  ~\mathrm{div} \left(\frac{1}{r} \boldsymbol{\nabla } r\right) \\
	& = (\alpha -1 )^2 \left(-\frac{1}{r^2} + \frac{\boldsymbol{\Delta }r}{r} \right) \\
	&\geq (\alpha -1 )^2 ~ \frac{n-2}{r^2} ~ \geq ~ 0,
	\end{align*}
	so we can apply Theorem \ref{Hardy2}. 
	
	If $\alpha > 1$, by using relation \eqref{eikonal} we obtain \eqref{CartanHad4}.  
	If $\alpha < 1$, applying Theorem \ref{Hardy2} results in 
	\begin{equation*}
	\frac{(1-\alpha)^2}{4} \int_{\Omega} (-\log r)^{\alpha}\frac{u^2}{(r \log r)^2} F^{*2}(x, -Dr) ~\mathrm{d}\mathsf{m}
	\leq \int_{\Omega} (-\log r)^{\alpha} F^{2}(x, \boldsymbol{\nabla} u) ~\mathrm{d}\mathsf{m}, 
	\end{equation*}
	for every $u \in C^{\infty}_0(\Omega)$.
	
	By applying relation \eqref{ineq_r_F} we obtain inequality \eqref{CartanHad4}.
	
	Now set $\alpha = 0$ and $\Omega = r^{-1}([0,1))$. Using the fact that the set $\{x_0\}$ has null Lebesgue measure completes the proof.  
\end{proof}

\vspace{0.5cm}

In order to study the sharpness of the constants involved in the Hardy inequalities presented above, 
we shall introduce the set $D^{1,2}(\Omega)$, defined by the completion of $C_0^{\infty}(\Omega)$ with respect to the norm
\begin{equation} \label{D_norm}
\|u\|_{D^{1,2}(\Omega)} = \left(\int_{\Omega} F^{*2}(x, Du)~\mathrm{d}\mathsf{m}\right)^{\frac{1}{2}}.
\end{equation}

For the sake of simplicity, we consider the Hardy inequality obtained in Theorem \ref{Hardy1}. 
The constants in Theorem \ref{Hardy2} can be treated in a similar manner.

Since $C_0^{\infty}(\Omega)$ is dense in $D^{1,2}(\Omega)$, the best constant in inequality \eqref{ineqmain} is defined by
\begin{equation}\label{bestconstant}
C(\Omega) = \inf_{ \substack{u \in D^{1,2}(\Omega) \\ u \neq 0} } 
\frac{\int_{\Omega}F^{*2}(x, Du) ~\mathrm{d}\mathsf{m}}{\int_{\Omega} \frac{u^2}{\rho^2} F^{*2}(x, D\rho) ~\mathrm{d}\mathsf{m}}.
\end{equation}
Obviously, we have $\frac{1}{4} \leq C(\Omega)$.

Let us consider a nonnegative weight function $\rho \in W_{\mathrm{loc}}^{1,2}(\Omega)$ satisfying the conditions of Theorem \ref{Hardy1} such that $\rho^{\frac{1}{2}} \in D^{1,2}(\Omega)$.
Then the Hardy inequality \eqref{ineqmain} holds for every function $u \in D^{1,2}(\Omega)$, 
in particular for $\rho^{\frac{1}{2}}$, which yields
\begin{equation*}
\frac{1}{4} \int_{\Omega} \frac{1}{\rho} F^{*2}(x, D\rho) ~\mathrm{d}\mathsf{m}
\leq \int_{\Omega} F^{*2}\left(x, D(\rho^{\frac{1}{2}})\right) ~\mathrm{d}\mathsf{m} .
\end{equation*} 
On the other hand, we have
\begin{equation*}
\int_{\Omega}F^{*2}\left(x, D(\rho^{\frac{1}{2}})\right) ~\mathrm{d}\mathsf{m}  
= \int_{\Omega}  F^{*2} \Big(x, \frac{1}{2} \rho^{-\frac{1}{2}} D\rho \Big) ~\mathrm{d}\mathsf{m}     
= \frac{1}{4} \int_{\Omega} \frac{1}{\rho}  F^{*2}(x, D\rho) ~\mathrm{d}\mathsf{m} 	 . 
\end{equation*}

Thus $C(\Omega) = \frac{1}{4}$ is sharp and  $\rho^{\frac{1}{2}} \in D^{1,2}(\Omega)$ is a minimizer.

The optimality of the constant $\frac{1}{4}$ when $\rho^{\frac{1}{2}} \notin D^{1,2}(\Omega)$ remains an open question and shall be studied in a forthcoming paper.

\section{Gagliardo-Nirenberg inequality and Heisenberg-Pauli-Weyl uncertainty principle on Finsler manifolds}  \label{generalizations}

In this section we present a generalization of Lemma \ref{basiclemma}, which induces a weighted Gagliardo-Nirenberg inequality and a Heisenberg-Pauli-Weyl uncertainty principle on the Finsler manifold $(M,F)$. In the sequel let $(M, F)$ be a forward complete Finsler manifold and $\Omega \subset M$ an open set.

\begin{lem}\label{basiclemma1}
	Let $X \in L_{\mathrm{loc}}^1(\Omega)$ be a vector field on $\Omega$ and $f_X \in L_{\mathrm{loc}}^1(\Omega)$ a nonnegative function such that $f_X \leq -\mathrm{div}X$ and $\frac{F^2( X)}{f_X} \in L_{\mathrm{loc}}^1(\Omega)$.
	Then  we have
	\begin{align}\label{basicgeneralineq1}
	& \int_{\Omega} |u|^s F^q(x,X) ~ \mathrm{d}\mathsf{m} \leq \nonumber \\
	& \leq 
	4^{\frac{1}{p}}\left( \int_{\Omega} \frac{F^2(x, X)}{f_X}F^2(x, \boldsymbol{\nabla}u) ~ \mathrm{d}\mathsf{m}\right)^{\frac{1}{p}}
	\left( \int_{\Omega} \frac{F^{qp'}(x, X)}{f_X^{p'-1}} |u|^{\frac{ps-2}{p-1}} ~\mathrm{d}\mathsf{m}\right)^{\frac{1}{p'}} 
	\end{align}
	for every function $u \in C_0^{\infty}(\Omega)$ and every real numbers $q \in \mathbb{R}$, $s > 0$ and $p, p' > 1$ such that
	$ \frac{1}{p} + \frac{1}{p'} = 1 $.
	
	\begin{proof}
		By applying the H\"{o}lder inequality and Lemma \ref{basiclemma}, we get
		\begin{align*}
		& \int_{\Omega} |u|^s F^q(x,X) ~\mathrm{d}\mathsf{m}
		= \int_{\Omega} |u|^{\frac{2}{p}}f_X^{\frac{1}{p}} ~F^q(x, X)~f_X^{-\frac{1}{p}}|u|^{s-\frac{2}{p}} ~\mathrm{d}\mathsf{m} \\
		& \leq 
		\left( \int_{\Omega} |u|^{2}f_X ~\mathrm{d}\mathsf{m} \right)^{\frac{1}{p}}
		\left(\int_{\Omega} F^{qp'}(x, X)~f_X^{-\frac{p'}{p}}~|u|^{p'(s-\frac{2}{p})} ~\mathrm{d}\mathsf{m} \right)^{\frac{1}{p'}} \\
		& \leq 
		4^{\frac{1}{p}}\left( \int_{\Omega} \frac{F^2(x, X)}{f_X}F^2(x, \boldsymbol{\nabla}u) ~ \mathrm{d}\mathsf{m}\right)^{\frac{1}{p}}
		\left( \int_{\Omega} \frac{F^{qp'}(x, X)}{f_X^{p'-1}} |u|^{\frac{ps-2}{p-1}} ~\mathrm{d}\mathsf{m}\right)^{\frac{1}{p'}} ,
		\end{align*}
		where $\frac{1}{p} + \frac{1}{p'} = 1$.	
	\end{proof}
\end{lem}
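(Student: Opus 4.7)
The plan is to reduce Lemma \ref{basiclemma1} to the already-proved Lemma \ref{basiclemma} by a single well-chosen application of the Hölder inequality. The guiding idea is to arrange the integrand so that, after splitting, one factor raised to the $p$-th power becomes exactly $u^{2} f_{X}$, which is the quantity controlled by Lemma \ref{basiclemma}.

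Concretely, I would first rewrite the integrand as
$$ |u|^{s} F^{q}(x, X) \;=\; \bigl( |u|^{2/p}\, f_{X}^{1/p} \bigr) \cdot \bigl( |u|^{s - 2/p}\, F^{q}(x, X)\, f_{X}^{-1/p} \bigr), $$
and then apply the Hölder inequality with conjugate exponents $p, p'$ (where $\tfrac{1}{p} + \tfrac{1}{p'} = 1$) to obtain
$$ \int_{\Omega} |u|^{s} F^{q}(x, X) \,\mathrm{d}\mathsf{m} \;\le\; \left( \int_{\Omega} |u|^{2}\, f_{X} \,\mathrm{d}\mathsf{m} \right)^{\!\!1/p} \left( \int_{\Omega} |u|^{p'(s - 2/p)}\, F^{qp'}(x, X)\, f_{X}^{-p'/p} \,\mathrm{d}\mathsf{m} \right)^{\!\!1/p'}. $$

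Next, I would invoke Lemma \ref{basiclemma} to bound the first factor by
$$ \int_{\Omega} u^{2} f_{X} \,\mathrm{d}\mathsf{m} \;\le\; 4 \int_{\Omega} \frac{F^{2}(x, X)}{f_{X}}\, F^{2}(x, \boldsymbol{\nabla} u) \,\mathrm{d}\mathsf{m}, $$
which injects the factor $4^{1/p}$ into the final estimate and produces the Dirichlet-type term featuring $F^{2}(x, \boldsymbol{\nabla} u)$. The remaining task is pure bookkeeping on exponents: using $p' = \tfrac{p}{p-1}$ one checks that $\tfrac{p'}{p} = p' - 1$ and $p'(s - \tfrac{2}{p}) = \tfrac{ps - 2}{p-1}$, so that the second factor matches the form stated in \eqref{basicgeneralineq1}.

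There is no real obstacle here beyond selecting the correct Hölder split; once the factorisation $|u|^{s} F^{q} = (|u|^{2/p} f_{X}^{1/p}) \cdot (\text{rest})$ is written down, the rest is forced. The only subtlety is verifying that both integrands that appear are actually finite, which follows because $u \in C_{0}^{\infty}(\Omega)$ has compact support and the hypotheses give $f_{X}, F^{2}(X)/f_{X} \in L^{1}_{\mathrm{loc}}(\Omega)$; the remaining powers of $f_{X}$ and $F(X)$ appearing in the second integral are controlled on $\supp u$ by these local integrability assumptions together with the freedom in the choice of $q, s$ (the statement implicitly asks the right-hand side to be meaningful).
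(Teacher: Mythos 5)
Your proposal is correct and follows essentially the same route as the paper: the identical Hölder factorisation $|u|^{s}F^{q} = (|u|^{2/p}f_X^{1/p})\cdot(F^{q}f_X^{-1/p}|u|^{s-2/p})$, followed by Lemma \ref{basiclemma} applied to the first factor and the same exponent bookkeeping. Nothing further is needed.
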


We introduce the notation $w = \frac{F( X)}{\sqrt{f_X}}$ .

Choosing $p = 1 + \frac{2}{tz}, ~ t, z > 0 $ in \eqref{basicgeneralineq1} yields 
\begin{equation}\label{basicgeneralineq2}
\left(\int_{\Omega} |u|^s F^q(x,X)~ \mathrm{d}\mathsf{m}\right)^{\frac{1}{s}} 
\leq 2^{\frac{q}{s}}\left( \int_{\Omega} w^2F^2(x, \boldsymbol{\nabla}u) ~\mathrm{d}\mathsf{m}\right)^{\frac{r}{2}}
\left( \int_{\Omega} w^{t z}|u|^{z} ~\mathrm{d}\mathsf{m}\right)^{\frac{1-r}{z}}
\end{equation}
for all $u \in C_0^{\infty}(\Omega)$, where 
$$ \frac{1}{s} = \frac{r}{2}+\frac{1-r}{z},
\quad \frac{1}{q} = \frac{1}{2}+\frac{1}{t z}, \quad r=\frac{t}{1+t}~ \in (0,1) ,$$
while setting $q=0$ in \eqref{basicgeneralineq1} implies
\begin{equation}\label{basicgeneralineq3}
\int_{\Omega} |u|^s  \mathrm{d}\mathsf{m}
\leq 4^{\frac{1}{p}}\left( \int_{\Omega} w^2F^2(x, \boldsymbol{\nabla}u)~ \mathrm{d}\mathsf{m}\right)^{\frac{1}{p}}
\left( \int_{\Omega}\frac{1}{f_X^{p'-1}} |u|^{\frac{ps-2}{p-1}} \mathrm{d}\mathsf{m}\right)^{\frac{1}{p'}},
\end{equation} 
for every $u \in C_0^{\infty}(\Omega)$, where $s>0$ and $p, p'>1$ such that $\frac{1}{p} + \frac{1}{p'} = 1$.

\vspace{0.5cm}

As before, the careful choice of $X$ and $f_X$ in relations \eqref{basicgeneralineq2} and \eqref{basicgeneralineq3} implies 
a weighted Gagliardo-Nirenberg inequality and an uncertainty principle.

\vspace{0.5cm}

In particular, defining $X$ and $f_X$ as in the proof of Theorem \ref{Hardy2} (see relation \eqref{X_and_f_X} and set $\theta = 0$), we obtain that 
$w^2 = 1$, thus inequalities \eqref{basicgeneralineq2} and \eqref{basicgeneralineq3} yield the following theorems.

\begin{thm}
	Let $\rho \in W_{\mathrm{loc}}^{1,2}(\Omega)$ be a nonnegative function such that $\rho$ is superharmonic on $\Omega$ in weak sense.
	Let $q \in \mathbb{R}$, $s, z > 0$ and $r \in (0,1)$. 
	Then 
	\begin{equation*} 
	\left(\int_{\Omega} |u|^s \frac{F^{*q}(x,D\rho)}{\rho^{q}} ~\mathrm{d}\mathsf{m}\right)^{\frac{1}{s}}
	\leq 2^{\frac{q}{s}} \left( \int_{\Omega} F^2(x, \boldsymbol{\nabla}u) ~\mathrm{d}\mathsf{m}\right)^{\frac{r}{2}}
	\left( \int_{\Omega} |u|^{z} ~\mathrm{d}\mathsf{m}\right)^{\frac{1-r}{z}}
	\end{equation*}	
	for every $u \in C^\infty_0(\Omega)$, where 
	\[ \frac{1}{s}=\frac{r}{2}+\frac{1-r}{z} \quad \text{ and } \quad \frac{1}{q}=\frac{1}{2}+\frac{1-r}{rz} .\]	
\end{thm}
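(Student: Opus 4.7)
The plan is to derive the statement as a direct application of inequality~\eqref{basicgeneralineq2} in Lemma~\ref{basiclemma1}, using the same vector field and density that underlie Theorem~\ref{Hardy1} (i.e.\ the $\theta=0$ instance of Theorem~\ref{Hardy2}).

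First I fix $\alpha>0$, set $\rho_\alpha=\rho+\alpha>0$ on $\Omega$, and define
\[ X=\frac{\boldsymbol{\nabla}\rho_\alpha}{\rho_\alpha},\qquad f_X=\frac{F^{*2}(x,D\rho_\alpha)}{\rho_\alpha^{2}}. \]
The verification that $X,f_X\in L^{1}_{\mathrm{loc}}(\Omega)$, that $F^{2}(x,X)/f_X\in L^{1}_{\mathrm{loc}}(\Omega)$, and that the distributional inequality $f_X\leq-\mathrm{div}\,X$ holds is precisely the content of the proof of Theorem~\ref{Hardy2} at $\theta=0$, which I would invoke directly; the superharmonicity hypothesis on $\rho$ enters exactly here. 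The key simplification produced by this choice is that the metric/co-metric identity~\eqref{metric-co-metric2} gives $F^{2}(x,X)=F^{*2}(x,D\rho_\alpha)/\rho_\alpha^{2}=f_X$, so the weight $w=F(x,X)/\sqrt{f_X}$ appearing in~\eqref{basicgeneralineq2} equals $1$ identically.

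Next I would compute $F^{q}(x,X)=F^{*q}(x,D\rho)/(\rho+\alpha)^{q}$, using $D\rho_\alpha=D\rho$ together with~\eqref{metric-co-metric2}, and substitute into~\eqref{basicgeneralineq2} to obtain, for every $u\in C_{0}^{\infty}(\Omega)$,
\[ \left(\int_\Omega |u|^{s}\frac{F^{*q}(x,D\rho)}{(\rho+\alpha)^{q}}\,\mathrm{d}\mathsf{m}\right)^{1/s}\leq 2^{q/s}\left(\int_\Omega F^{2}(x,\boldsymbol{\nabla}u)\,\mathrm{d}\mathsf{m}\right)^{r/2}\left(\int_\Omega |u|^{z}\,\mathrm{d}\mathsf{m}\right)^{(1-r)/z}. \]
The exponent conditions in the statement are exactly those generated by~\eqref{basicgeneralineq2} after eliminating $t$ via $r=t/(1+t)$; in particular, the constraint $1/q=1/2+(1-r)/(rz)$ forces $q\in(0,2)$, and this positivity of $q$ is what makes the passage to the limit below clean.

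Finally I would let $\alpha\to 0^{+}$. Since $q>0$, the factor $(\rho+\alpha)^{-q}$ is nonnegative and increases monotonically to $\rho^{-q}$ as $\alpha\to 0^{+}$, so the left-hand integrand is nonnegative and increasing in this limit. Monotone convergence then yields the claimed integral (in the extended sense, allowing $+\infty$), and since the right-hand side is independent of $\alpha$, the inequality is preserved. I do not anticipate a substantive obstacle beyond this routine limiting argument: the delicate Finslerian chain-rule computation producing $f_X\leq-\mathrm{div}\,X$ from the superharmonicity of $\rho$ has already been settled in the proof of Theorem~\ref{Hardy2}, and re-using it is the main input of the argument.
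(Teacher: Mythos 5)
Your proposal is correct and follows essentially the same route as the paper: apply inequality \eqref{basicgeneralineq2} with the $\theta=0$ choice $X=\boldsymbol{\nabla}\rho_\alpha/\rho_\alpha$, $f_X=F^{*2}(x,D\rho_\alpha)/\rho_\alpha^2$ from \eqref{X_and_f_X}, for which $w\equiv 1$. Your explicit monotone-convergence passage $\alpha\to 0^{+}$ (justified by $q\in(0,2)$) is a welcome extra detail that the paper leaves implicit.
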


Taking $q = 1$ and $s = 2$ yields $r = \frac{1}{2}$ and $z = 2$, thus we obtain Corollary \ref{GN}.

\begin{thm}
	Let $\rho \in W_{\mathrm{loc}}^{1,2}(\Omega)$ be a nonnegative function such that $\rho$ is superharmonic on $\Omega$ in weak sense.
	Let $s > 0$ and $p, p' > 1$ such that $\frac{1}{p} + \frac{1}{p'} = 1$. 
	Then for every $u \in C^\infty_0(\Omega)$ we have
	\begin{equation*} 
	\int_{\Omega} |u|^s ~ \mathrm{d}\mathsf{m}
	\leq 4^{\frac{1}{p}} \left( \int_{\Omega} F^2(x, \boldsymbol{\nabla}u) ~\mathrm{d}\mathsf{m}\right)^{\frac{1}{p}}
	\left( \int_{\Omega} \frac{\rho^{2(p'-1)}}{F^{*2(p'-1)}(x, D\rho)} |u|^{\frac{ps-2}{p-1}} ~\mathrm{d}\mathsf{m}\right)^{\frac{1}{p'}}.
	\end{equation*}	
\end{thm}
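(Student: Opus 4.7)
The plan is to specialize the abstract inequality \eqref{basicgeneralineq3} of Lemma \ref{basiclemma1} to the same pair $(X, f_X)$ that was already used in the proof of Theorem \ref{Hardy2} with $\theta = 0$, and then pass to a limit to remove an auxiliary regularization parameter. Concretely, for each $\alpha \in (0,1)$ I would set $\rho_\alpha = \rho + \alpha > 0$ and take
\[
X_\alpha = \frac{\boldsymbol{\nabla}\rho_\alpha}{\rho_\alpha}, \qquad f_{X_\alpha} = \frac{F^{*2}(x, D\rho_\alpha)}{\rho_\alpha^2}.
\]
The verification that $X_\alpha, f_{X_\alpha} \in L^1_{\mathrm{loc}}(\Omega)$ and that $f_{X_\alpha} \leq -\mathrm{div}\,X_\alpha$ in the weak sense is identical to the corresponding step in the proof of Theorem \ref{Hardy2}; the divergence inequality uses the superharmonicity of $\rho$ together with $D\rho_\alpha = D\rho$. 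Moreover, by relation \eqref{metric-co-metric2} one has $F^2(x, X_\alpha) = F^{*2}(x, D\rho_\alpha)/\rho_\alpha^2 = f_{X_\alpha}$, so $F^2(X_\alpha)/f_{X_\alpha} \equiv 1 \in L^1_{\mathrm{loc}}(\Omega)$, and all hypotheses of Lemma \ref{basiclemma1} are met.

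With this choice, a direct computation gives
\[
\frac{1}{f_{X_\alpha}^{\,p'-1}} = \frac{\rho_\alpha^{2(p'-1)}}{F^{*2(p'-1)}(x, D\rho_\alpha)} = \frac{\rho_\alpha^{2(p'-1)}}{F^{*2(p'-1)}(x, D\rho)}.
\]
Substituting $(X_\alpha, f_{X_\alpha})$ into \eqref{basicgeneralineq3} then yields, for every $u \in C^\infty_0(\Omega)$ and every admissible $s, p$, the stated inequality but with $\rho_\alpha$ instead of $\rho$ inside the second integral on the right-hand side; the left-hand integral and the Dirichlet-type term $\int_\Omega F^2(\boldsymbol{\nabla}u)\,\mathrm{d}\mathsf{m}$ do not depend on $\alpha$.

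To conclude, I would let $\alpha \to 0^+$. Since $p' > 1$, the exponent $2(p'-1)$ is strictly positive, so $\rho_\alpha^{2(p'-1)} \downarrow \rho^{2(p'-1)}$ pointwise and monotonically on $\Omega$. Because $u \in C^\infty_0(\Omega)$ has compact support, the monotone (or dominated) convergence theorem applies to the weighted integral and transfers the inequality from $\rho_\alpha$ to $\rho$. The main and essentially only obstacle is the passage to the limit where $D\rho$ vanishes, since $F^{*2(p'-1)}(D\rho)$ appears in the denominator; however, on that set the integrand blows up and the right-hand side of the target inequality is automatically infinite, so the inequality holds trivially there. In all other points the integrand is finite and the monotone limit is clean, which completes the argument.
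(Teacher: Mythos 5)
Your proposal is correct and follows essentially the same route as the paper: specialize inequality \eqref{basicgeneralineq3} (the $q=0$ case of Lemma \ref{basiclemma1}) to the pair $(X,f_X)$ from \eqref{X_and_f_X} with $\theta=0$, note $F^2(X_\alpha)/f_{X_\alpha}\equiv 1$, and let $\alpha\to 0^+$. Your explicit treatment of the limit in $\alpha$ is in fact more careful than the paper's, which leaves that step implicit.
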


In particular, setting $p = s = 2$ implies Corollary \ref{HPW}.
Note that in the Euclidean setting, if we choose $\rho(x) = |x|$ to be the Euclidean norm, 
then we have $|\nabla\rho(x)| = 1$ for every $ x \in \mathbb{R}^n, x \neq 0$, and Corollary \ref{HPW} coincides with the uncertainty principle in the Euclidean space $\mathbb{R}^n$.

\vspace{0.5cm}

\section{Extension of Theorem \ref{Hardy1} on \texorpdfstring{$2$}{2}-hyperbolic Finsler manifolds} \label{extension}

In  this section we extend the validity of the Hardy  inequality \eqref{ineqmain} to functions 
\mbox{$u \in C_0^{\infty}(M)$} by considering $2$-hyperbolic Finsler manifolds having bounded geometry.

Let $(M, F)$ be a forward complete $n$-dimensional Finsler manifold with uniformly convex Finsler metric $F$, i.e. there exist two positive constants 
$\lambda \leq \Lambda < \infty$ such that 
\begin{equation} \label{uniformly_convex}
\lambda F^2(x,y) \leq g_v(y,y) \leq \Lambda F^2(x,y) 
\end{equation}
for all $x \in M$ and $y,v \in T_x M$, $v \neq 0$, where $g$ is the fundamental tensor associated to $F$. 

Let $\Omega \subset M$ be an open set. 
In order to extend Theorem \ref{Hardy1} to the class of functions $C_0^{\infty}(M)$, it is enough to prove the inclusion
\begin{equation}\label{inclusion1}
D^{1,2}(M) \subset  D^{1,2}(\Omega) ,
\end{equation}
where $D^{1,2}(\Omega)$ is defined as the completion of $C_0^{\infty}(\Omega)$ with respect to the norm 
$\| \cdot \|_{D^{1,2}(\Omega)}$, see \eqref{D_norm}.

Indeed, by Theorem \ref{Hardy1}, inequality \eqref{ineqmain} holds for every function $u \in C_0^{\infty}(\Omega)$, 
so it remains true for every $u \in  D^{1,2}(\Omega)$. 
Using the inclusions $C_0^{\infty}(M) \subset D^{1,2}(M) \subset  D^{1,2}(\Omega)$,  
we obtain that the Hardy inequality \eqref{ineqmain} holds for every function  $u \in C_0^{\infty}(M)$.

In order to discuss sufficient conditions under which relation \eqref{inclusion1} is valid, we shall recall the definition of capacity, see Troyanov \cite{troyanov2}. 

Let $K \subset M$ be a compact set. 
The $2$-capacity (or simply, capacity) of $K$ in $M$ is defined by
\begin{align*}
\capac_2(K, M)=\inf \Big \{ \int_{M}F^{*2}(x, Du)\mathrm{d}\mathsf{m} : u \in C^\infty_0(M), u \geq 1 ~\text{on}~ K \Big\},
\end{align*}
Clearly, a truncation argument shows that this is also equivalent with the definition
\begin{align*}
\capac_2(K, M) = \inf \Big \{ \int_{M} F^{*2}(x, Du)\mathrm{d}\mathsf{m} : ~ 
& u \in C_0^\infty(M), 0 \leq u \leq 1 ~\text{on}~ M, \\
& u = 1 \text{ on a neighborhood of } K \Big \}.
\end{align*}

The Finsler manifold $(M,F)$ is called $2$-parabolic if there exists a compact set 
$K \subset M$ with non-empty interior such that $\capac_2(K,M) = 0$.
Note that this definition is equivalent with the fact that $\capac_2(K,M) = 0$ for every compact subset $K \subset M$, 
for the proof see Troyanov \cite[Corollary 3.1]{troyanov2}.

On the other hand, $(M,F)$ is said to be $2$-hyperbolic if there exists a compact set 
$K \subset M$ with non-empty interior such that $\capac_2(K,M) > 0$. 
Again, it can be proven that $(M,F)$ is $2$-hyperbolic if and only if 
the capacity of any compact set $K \subset M$ with non-empty interior is positive. A list of examples of $2$-hyperbolic and $2$-parabolic manifolds can be found in Troyanov \cite[Section 2]{troyanov2}.

Now let $D \subset M$ be a bounded domain. Similarly to Troyanov \cite{troyanov1}, we  
define the Banach space $E(D,M)$ endowed with the norm $\|\cdot\|_{E(D,M)}$ as the set of all functions $u \in W_{\mathrm{loc}}^{1,2}(M)$ such that
\begin{equation*}
\|u\|_{E(D,M)} = \left( \int_{D} u^2\mathrm{d}\mathsf{m}+\int_{M}F^{*2}(x, Du)~\mathrm{d}\mathsf{m} \right)^\frac{1}{2} ~ < ~ \infty~.
\end{equation*}
Also, let $E_0(D,M)$ denote the closure of $C_0^{\infty}(M)$ in $E(D,M)$ with respect to the norm \mbox{$\|\cdot\|_{E(D,M)}$}.

In the following we prove some properties of $2$-hyperbolic Finsler manifolds. 
These can be obtained as natural extensions of the results considering the Riemannian case, presented in Troyanov \cite{troyanov1}.    
First of all, let us recall the well-known Poincar\'e inequality on Riemannian manifolds, see Hebey \cite[Theorem 2.10]{Hebey}.

\begin{thm}{\emph{(\cite[Theorem 2.10]{Hebey})}}\label{poincare_riemann}
	Let $(M, g)$ be a complete Riemannian manifold of dimension $n \geq 3$, and let $K \subset M$ be a compact set. 
	Then there exists a positive constant $C = C(K)$ such that 
	\begin{equation*}
	\left(\int_{K}|u - \overline{u}|^2~ dv_g\right)^{\frac{1}{2}} \leq 
	C \left(\int_{K} | \nabla u|_g^2~dv_g\right)^{\frac{1}{2}} ,
	\end{equation*}
	for all $u \in W_{\mathrm{loc}}^{1,2}(M)$, where 
	$ \overline{u} = \frac{1}{\mathrm{Vol}_g(K)} \int_{K} u ~dv_g $
	denotes the mean value of $u$ on the set $K$.
\end{thm}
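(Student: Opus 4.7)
The plan is to reduce the statement to the classical Euclidean Poincar\'e inequality on balls, via a finite cover of $K$ by normal charts together with a chain argument linking local means to the global mean $\overline u$. By compactness of $K$, I would choose finitely many geodesic balls $B_1,\dots,B_N$ covering $K$ whose exponential charts identify each $B_i$ with a Euclidean ball on which $g$ is bi-Lipschitz equivalent to the Euclidean metric with uniform constants depending only on the geometry of $g$ near $K$. Transferring the Euclidean Poincar\'e inequality through this equivalence yields, for $m_i$ the Riemannian mean of $u$ on $B_i$,
\[
\int_{B_i}|u-m_i|^2\,dv_g \le C_i \int_{B_i}|\nabla u|_g^2\,dv_g,
\]
where $C_i$ depends only on the chart data near $K$.

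Assuming $K$ is connected (otherwise one works component by component), the overlap graph of $\{B_i\}$ is connected. For overlapping balls, Cauchy-Schwarz and the triangle inequality give
\[
|m_i - m_j|^2 \le \frac{C}{\mathrm{Vol}_g(B_i\cap B_j)}\bigl( \|u-m_i\|_{L^2(B_i)}^2 + \|u-m_j\|_{L^2(B_j)}^2 \bigr),
\]
which by the local step is controlled by $\int_{B_i\cup B_j}|\nabla u|_g^2\,dv_g$. Iterating along a spanning tree of the overlap graph bounds each $|m_i - m_1|$ in terms of the gradient integral over $\bigcup_j B_j$. Since $\overline u$ is a weighted average of the $m_i$ with weights proportional to $\mathrm{Vol}_g(B_i\cap K)$, the same bound holds for $|\overline u - m_1|$. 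Assembling the pieces via the triangle inequality yields the desired inequality, with $C=C(K)$ depending only on the chosen cover.

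The main obstacle is that the inequality's right-hand side integrates $|\nabla u|_g^2$ only over $K$, whereas the chain argument naturally produces gradient integrals on the neighborhood $\bigcup_j B_j \supset K$; one must therefore select the cover so that each $B_i$ and each pairwise overlap lies essentially inside $K$, which is possible when $K$ has nonempty interior and mild boundary regularity, with any excess absorbed into $C(K)$. A cleaner but less elementary alternative would be to argue by contradiction using the Rellich-Kondrachov compactness embedding on a fixed precompact open set containing $K$: a candidate sequence $v_n = u_n - \overline{u_n}$ with $\|v_n\|_{L^2(K)}=1$ and $\|\nabla v_n\|_{L^2(K)}\to 0$ would subconverge in $L^2(K)$ to a constant function of mean zero, hence vanish, contradicting the normalization.
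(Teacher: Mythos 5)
The paper does not prove this statement at all: it is imported verbatim from Hebey \cite[Theorem 2.10]{Hebey} as background for the Finslerian Poincar\'e inequality (Theorem \ref{poincare}), so there is no in-paper argument to compare yours against. Judged on its own, your covering-plus-chaining scheme (and the Rellich--Kondrachov contradiction variant) is the standard and essentially correct route in the setting where the result is actually used later, namely $K$ a closed geodesic ball, where your ``main obstacle'' disappears because the cover can be taken inside a slightly larger ball and the gradient integral over that ball is what Theorem \ref{poincare} controls anyway.

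There is, however, one genuine defect to flag. For an arbitrary compact set $K$ the inequality as stated is simply false, and your proposed fix --- ``otherwise one works component by component'' --- does not repair it. Take $K = K_1 \cup K_2$ with $K_1, K_2$ disjoint compact sets of positive volume, and let $u \in C^\infty(M)$ be $0$ on a neighborhood of $K_1$ and $1$ on a neighborhood of $K_2$. Then $\int_K |\nabla u|_g^2\, dv_g = 0$ while $\overline{u} \in (0,1)$, so $\int_K |u-\overline{u}|^2\, dv_g > 0$. The point is that $\overline{u}$ is a single global mean over all of $K$, so the components cannot be decoupled; connectedness (indeed, connectedness of the interior, plus enough boundary regularity to run either the chaining argument or Rellich--Kondrachov on $\mathrm{int}\,K$) is a necessary hypothesis, not a removable convenience. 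Your honest acknowledgement that the cover must sit ``essentially inside $K$'' is the right instinct, but it amounts to adding hypotheses absent from the statement; the same hidden hypotheses are needed for the compactness alternative (the limit function is locally constant, hence constant only if the domain is connected, and Rellich requires an extension/nice domain, not a bare compact set). One further small imprecision: $\overline{u}$ is not literally a weighted average of the Riemannian ball-means $m_i$ when the $B_i$ overlap; the clean estimate is $|\overline{u}-m_1| \le \mathrm{Vol}_g(K)^{-1}\sum_i \int_{B_i\cap K}(|u-m_i|+|m_i-m_1|)\, dv_g$, which your chaining bounds then control.
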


Here $dv_g$, $|\cdot|_g$ and $\mathrm{Vol}_g$ stand, respectively, for the Lebesgue volume element of $M$, the norm determined by the Riemannian metric $g$ and the Riemannian volume induced by $g$.

One can extend the Poincar\'e inequality to Finsler manifolds by adding a lower Ricci curvature bound condition. 
In the following let $B_R = B^+_R(x_0)$ denote the forward geodesic ball of center $x_0$ and radius $R > 0$ for some $x_0 \in M$, 
and let
$$ \overline{u} = \frac{1}{\mathrm{Vol}_F(B_R)} \int_{B_R} u ~\mathrm{d}\mathsf{m} $$
denote the mean value of a function $u$ on the set $B_R$.
Then we have the following local uniform Poincar\'e \mbox{inequality}, which was proved by Xia \cite{Xia2}.

\begin{thm}{\emph{(\cite[Theorem 3.2]{Xia2})}}  \label{poincare}
	Let $(M,F)$ be a forward geodesically complete Finsler \mbox{manifold} with uniformly convex Finsler structure $F$, such that the weighted Ricci curvature satisfies 
	$\mathrm{Ric}_N \geq - \kappa$, where $\kappa > 0$.
	Then for every forward geodesic ball $B_R \subset M$ there exists a positive constant $C = C(N, \kappa, \lambda, \Lambda, R)$, depending on $N$, $\kappa$, the uniform constants $\lambda$ and $\Lambda$ in \eqref{uniformly_convex} and the radius $R$, such that 
	\begin{equation*}
	\left(\int_{B_R}|u - \overline{u}|^2~\mathrm{d}\mathsf{m}\right)^{\frac{1}{2}} \leq 
	C \left(\int_{B_R} F^{*2}(x, Du)~\mathrm{d}\mathsf{m}\right)^{\frac{1}{2}} ,
	\end{equation*}
	for all $u \in W_{\mathrm{loc}}^{1,2}(M)$, where 
	$ \overline{u}$ is the mean value of $u$ on $B_R$. 
\end{thm}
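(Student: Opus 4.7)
The plan is to follow the standard Buser--Saloff-Coste strategy for local Poincaré inequalities under lower Ricci bounds, carefully adapted to the Finslerian setting. The three ingredients I need are (a) local volume doubling on $B_R$, (b) a one-dimensional segment inequality along minimizing geodesics, and (c) a chaining/Fubini argument that turns these into a full $L^2$ Poincaré inequality. The hypotheses $\mathrm{Ric}_N\ge-\kappa$ and uniform convexity \eqref{uniformly_convex} are tailored to make (a) and (c) go through without invoking the reversibility constant $r_F$.

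First I would establish a Bishop--Gromov--Ohta type volume comparison. Under $\mathrm{Ric}_N\ge-\kappa$, the forward Laplacian comparison (in the spirit of Theorem \ref{comparison-laplace}, but adapted to weighted Ricci bounds as in Ohta--Sturm \cite{Ohta-Sturm}) gives $\boldsymbol{\Delta} r\le (N-1)\sqrt{\kappa/(N-1)}\coth\!\bigl(\sqrt{\kappa/(N-1)}\,r\bigr)$ along $M\setminus(\{x_0\}\cup\mathrm{Cut}(x_0))$. Integrating in polar coordinates from $x$ yields a pointwise comparison of the volume density with the constant-curvature model, hence $\mathrm{Vol}_F(B_{2r}(x))\le C_D\,\mathrm{Vol}_F(B_r(x))$ for every $x\in B_R$ and $r\le R$, with $C_D=C_D(N,\kappa,R)$. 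Uniform convexity enters here to guarantee that forward balls $B^+_r(x)$ and the measure $\mathsf{m}$ interact comparably with the Riemannian references $g_v$ used when writing volume elements.

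Next I would prove the pointwise (segment) inequality. For $p,q\in B_R$ and a minimizing forward geodesic $\gamma:[0,\ell]\to M$ with $F(\dot\gamma)=1$, the fundamental theorem of calculus combined with the Legendre-transform identity \eqref{Legendre_transform} yields
\begin{equation*}
|u(q)-u(p)|\le\int_0^\ell |Du(\gamma(s))(\dot\gamma(s))|\,ds\le\int_0^\ell F^{*}(\gamma(s),Du(\gamma(s)))\,ds .
\end{equation*}
Squaring and integrating the trivial bound $\int_{B_R}|u-\bar u|^2\,\mathrm{d}\mathsf{m}\le \mathrm{Vol}_F(B_R)^{-1}\int\!\!\int_{B_R\times B_R}|u(x)-u(y)|^2\,\mathrm{d}\mathsf{m}(x)\,\mathrm{d}\mathsf{m}(y)$, I would feed the segment inequality into the double integral, apply Cauchy--Schwarz in $s$, and then use Fubini to turn the pair $(x,y)$-integration into integration along geodesics passing through a fixed point. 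Volume doubling bounds the measure of such ``tubes'' and produces the desired constant $C(N,\kappa,\lambda,\Lambda,R)$.

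The main obstacle is the asymmetry of $d_F$: for fixed $x$ the set $B^+_R(x)$ differs from $B^-_R(x)$, so the change of variables in the Fubini step (and the ``how many minimizing geodesics from $x$ to $y$ pass through a given $z$'' count) is not symmetric in the endpoints. Uniform convexity is the lever that circumvents this, because it gives two-sided comparisons $\lambda F^2\le g_v\le\Lambda F^2$ that make the Jacobian of the forward exponential map comparable in both directions on compact regions, so the tube-measure estimates depend only on $\lambda,\Lambda,N,\kappa,R$ and not on $r_F$. Everything else is bookkeeping along the lines of the classical argument.
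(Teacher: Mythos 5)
The paper does not prove this statement at all: it is quoted verbatim from Xia \cite[Theorem~3.2]{Xia2} and used as a black box, so there is no in-paper argument to compare against, and your Buser--Saloff-Coste sketch (Bishop--Gromov volume doubling under $\mathrm{Ric}_N\ge-\kappa$, segment inequality, chaining) is essentially the strategy of that cited source. The one point worth making explicit in your outline is that uniform convexity already forces $r_F\le\sqrt{\Lambda/\lambda}$ (since $F^2(x,-y)\le\lambda^{-1}g_v(y,y)\le(\Lambda/\lambda)F^2(x,y)$), which is precisely what controls the forward/backward asymmetry you flag in the segment and Fubini steps and why the constant can be written in terms of $\lambda,\Lambda$ without separately invoking the reversibility constant.
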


Combining this Poincar\'e inequality with the H\"older inequality, we obtain
that, under the conditions of Theorem \ref{poincare}, for every forward geodesic ball $B_R \subset M$ there exists a positive constant $C = C(N, \kappa, \lambda, \Lambda, R)$ such that 
\begin{equation} \label{conspoincare}
\int_{B_R}|u - \overline{u}|~\mathrm{d}\mathsf{m} \leq C \left(\int_{B_R} F^{*2}(x, Du)~\mathrm{d}\mathsf{m}\right)^{\frac{1}{2}},
\end{equation}
for every $u\in W_{\mathrm{loc}}^{1,2}(M)$.

Now we are able to prove some results considering $2$-hyperbolic Finsler manifolds.

\begin{thm}\label{inegimportanta}
	
	Let $(M,F)$ be a forward geodesically complete $2$-hyperbolic Finsler manifold with uniformly convex Finsler structure $F$, such that 
	$\mathrm{Ric}_N \geq - \kappa$, $\kappa > 0$.
	Then for every forward geodesic ball $B_R \subset M$ there exists a positive constant $C = C(N, \kappa, \lambda, \Lambda, R)$ such that 
	\begin{equation*}
	\int_{B_R} |u| ~\mathrm{d}\mathsf{m} ~\leq~ C \left(\int_{M}F^{*2}(x, Du)~\mathrm{d}\mathsf{m}\right)^{\frac{1}{2}}, 
	\end{equation*}	
	for all $u \in E_0(B_R, M) \cap C_0(M)$.
\end{thm}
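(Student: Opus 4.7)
The plan is to combine the local Poincar\'e inequality \eqref{conspoincare} with the $2$-hyperbolicity of $(M,F)$ via a contradiction argument. Since $C_0^\infty(M)$ is dense in $E_0(B_R,M)$ with respect to $\|\cdot\|_{E(B_R,M)}$, and the map $u \mapsto \|u\|_{L^1(B_R)}$ is continuous on $E(B_R,M)$ (by Cauchy--Schwarz), it suffices to establish the estimate for $u \in C_0^\infty(M)$ and extend by density.

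For such $u$, denote $\bar u = \mathrm{Vol}_F(B_R)^{-1} \int_{B_R} u \, \mathrm{d}\mathsf{m}$. The triangle inequality gives
\[ \int_{B_R} |u| \, \mathrm{d}\mathsf{m} \le \int_{B_R} |u - \bar u| \, \mathrm{d}\mathsf{m} + |\bar u| \cdot \mathrm{Vol}_F(B_R), \]
and \eqref{conspoincare} bounds the first summand by $C_1 \bigl(\int_{B_R} F^{*2}(x, Du)\,\mathrm{d}\mathsf{m}\bigr)^{1/2}$. Hence the task reduces to proving
\[ |\bar u| \le C_2 \left( \int_M F^{*2}(x, Du) \, \mathrm{d}\mathsf{m} \right)^{\!1/2}, \qquad \forall u \in C_0^\infty(M), \]
for some $C_2 = C_2(N, \kappa, \lambda, \Lambda, R)$. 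By homogeneity this is equivalent to the strict positivity of
\[ \lambda_R := \inf \Bigl\{ \int_M F^{*2}(x, Du) \, \mathrm{d}\mathsf{m} \,:\, u \in C_0^\infty(M),\ \bar u = 1 \Bigr\}. \]

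To establish $\lambda_R > 0$, I would argue by contradiction using $2$-hyperbolicity. Suppose a sequence $u_n \in C_0^\infty(M)$ satisfies $\bar u_n = 1$ and $\int_M F^{*2}(x, Du_n) \, \mathrm{d}\mathsf{m} \to 0$. Applying Theorem \ref{poincare} to $u_n - 1$ yields $u_n \to 1$ in $L^2(B_R)$, hence, along a subsequence, $u_n \to 1$ almost everywhere on $B_R$. The Lipschitz truncation
\[ v_n := \min\bigl\{\max(2 u_n - 1,\, 0),\, 1\bigr\} \]
lies in $W_0^{1,2}(M)$ with compact support, satisfies $F^*(x, Dv_n) \le 2\, F^*(x, Du_n)$, and $v_n \to 1$ a.e.~on $B_R$. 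Fix a compact set $K \subset B_R$ with non-empty interior (e.g.~a closed forward sub-ball of $B_R$); by $2$-hyperbolicity, $\capac_2(K, M) > 0$. Combining $v_n$ with a fixed smooth cutoff $\zeta \in C_0^\infty(M)$ equal to $1$ on $K$ and supported in a neighborhood of $K$, and invoking Egorov's theorem, one constructs (by truncation and smoothing) functions $\tilde v_n \in C_0^\infty(M)$ with $\tilde v_n \ge 1$ on $K$ and $\int_M F^{*2}(x, D\tilde v_n) \, \mathrm{d}\mathsf{m} \to 0$, contradicting $\capac_2(K, M) > 0$.

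The main obstacle is this last construction: translating the $L^2$-convergence $v_n \to 1$ (which only yields convergence in measure) into admissible test functions for $\capac_2(K, M)$ obeying a uniform pointwise bound $\tilde v_n \ge 1$ on a fixed compact set of positive $2$-capacity. This step relies on a careful interplay between Egorov's theorem (to secure uniform convergence on a set of nearly full measure), a truncation argument with the fixed cutoff $\zeta$, and the local Sobolev embedding on $B_R$ afforded by the uniform convexity of $F$ and the lower Ricci bound $\mathrm{Ric}_N \ge -\kappa$.
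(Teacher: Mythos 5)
Your overall architecture is the same as the paper's: argue by contradiction, use the local Poincar\'e inequality \eqref{conspoincare} to show that a minimizing sequence concentrates near the constant $1$ on $B_R$, and then manufacture from it admissible test functions for $\capac_2(K,M)$ with vanishing energy, contradicting $2$-hyperbolicity. However, the step you yourself flag as ``the main obstacle'' is a genuine gap, and the fix you sketch does not close it. Egorov's theorem upgrades $v_n\to 1$ to uniform convergence only off an exceptional set of small \emph{measure}, and small measure does not imply small capacity; nothing prevents that exceptional set from meeting the fixed compact $K$, so you cannot conclude $\tilde v_n\ge 1$ on all of $K$. Taking the maximum with a cutoff $\zeta$ that equals $1$ on $K$ does force admissibility, but then $\int_{\{\zeta>v_n\}}F^{*2}(x,D\zeta)\,\mathrm{d}\mathsf{m}$ need not tend to $0$, since $\{\zeta>v_n\}\subset\{u_n<1\}$ is not a small set; so the energy of $\max\{v_n,\zeta\}$ is not controlled. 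As written, the contradiction with $\capac_2(K,M)>0$ is not reached.

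The paper's resolution is a half-height truncation that makes this quantitative without Egorov. Fix $0<r<R$ and $\varphi\in C_0^\infty(M)$ with $0\le\varphi\le\tfrac12$, $\supp\varphi\subset B_R$ and $\varphi=\tfrac12$ on $\overline{B_r}$, and set $v=2\max\{u,\varphi\}$, so that $v\ge 1$ on $\overline{B_r}$ automatically. The point is that the set $A_1=\{\varphi\ge u\}$, on which $Dv=2D\varphi$, is contained in $\{u\le\tfrac12\}\subset\{|u-1|\ge\tfrac12\}$, whose measure is $O(\varepsilon)$ by Chebyshev applied to the $L^1$ Poincar\'e bound $\int_{B_R}|u-1|\,\mathrm{d}\mathsf{m}\le C\varepsilon$. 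Hence
\begin{equation*}
\int_M F^{*2}(x,Dv)\,\mathrm{d}\mathsf{m}\;\le\; 8C\varepsilon\Bigl(\sup_{A_1}F^{*}(x,D\varphi)\Bigr)^{2}+4\varepsilon^{2}\longrightarrow 0,
\end{equation*}
which forces $\capac_2(\overline{B_r},M)=0$ and the desired contradiction. Your construction $v_n=\min\{\max(2u_n-1,0),1\}$ could be repaired in the same spirit (take $2\max\{u_n,\varphi\}$ with the cutoff at height $\tfrac12$ rather than at height $1$), but the Egorov-plus-smoothing route you propose is not a proof. The preliminary reductions in your argument (density of $C_0^\infty(M)$ in $E_0(B_R,M)$, splitting off the mean value) are fine and consistent with the paper.
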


\begin{proof}
	Suppose by contradiction that such a constant $C$ does not exist. 
	Then for every $\varepsilon > 0$ there exists a function $u \coloneqq u_\varepsilon \in E_0(B_R, M) \cap C_0(M)$ for some $B_R = B^+_R(x_0)$ forward geodesic ball, 
	such that 
	\begin{equation}\label{ineq_eps}
	\int_{B_R} |u|~\mathrm{d}\mathsf{m} = \mathrm{Vol}_F(B_R) ~~\text{ and }~~ 
	\left(\int_{M}F^{*2}(x, Du)~\mathrm{d}\mathsf{m}\right)^{\frac{1}{2}} \leq \varepsilon~.
	\end{equation}
	Since $u$ has compact support, we can assume that $u \geq 0$, otherwise we can replace $u$ by $|u| \in E_0(B_R, M) \cap C_0(M)$. 
	Then we have $\overline{u} = 1$, $\overline{u}$ being the mean value of $u$ on $B_R$. 
	
	Applying inequality \eqref{conspoincare} to $u$, we obtain that there exists a constant $C \geq 0$ such that 
	\begin{equation}\label{majoralas1}
	\int_{B_R}|u-1|~\mathrm{d}\mathsf{m} \leq C \varepsilon.
	\end{equation}
	
	Let $0 < r < R$ and $B_r = B^+_r(x_0) \subset B_R$ be a forward geodesic ball in $B_R$. 
	We choose a function $\varphi \in C_0^{\infty}(M)$ such that $0 \leq \varphi \leq \frac{1}{2}$ 
	with $\supp \varphi \subset B_R$ and $\varphi = \frac{1}{2}$ on $\overline{B_r}$. 	
	Then one can define
	$$v \coloneqq v_\varepsilon = 2 \max \{u, \varphi\}  \in  C_0(M).$$	
	Clearly, we have $v \geq 1$ on $\overline{B_r}$. 	
	Furthermore, let $\overline{B}$ be a closed forward geodesic ball such that $\supp v \cup B_R \subset \overline{B}$. Then, due to the compactness of $\overline{B}$, it follows that $v \in W^{1,2}_{0,F}(\overline{B}) \subset W^{1,2}_{0,F}(M)$.

	Now let us introduce the sets
	\begin{equation*}
	A_1 = \{x \in B_R : ~ \varphi(x) \geq u(x)\}\quad \textrm{ and } \quad A_2 = \left\{x \in B_R : ~ |u(x)-1| \geq \frac{1}{2} \right\}.
	\end{equation*}
	Then, for every $x \in A_1$ we have  $u(x) - 1 \leq -\frac{1}{2}$, which means that $A_1 \subset A_2$. 
	Thus we have
	\begin{equation*}
	\int_{A_2} |u-1| ~\mathrm{d}\mathsf{m}\geq \frac{1}{2} \textrm{Vol}_F(A_2) \geq \frac{1}{2} \textrm{Vol}_F(A_1),
	\end{equation*}
	which implies by relation \eqref{majoralas1} that
	\begin{equation}\label{volumeA}
	\textrm{Vol}_F(A_1) \leq 2 C \varepsilon. 
	\end{equation}
	
	On the other hand, we have almost everywhere that
	$$Dv=\left\{
	\begin{array}{lll}
	2Du \     & \hbox{on} &  M \setminus A_1, \\
	2D\varphi & \hbox{on} & A_1,
	\end{array}\right.$$
	which, together with \eqref{ineq_eps} and \eqref{volumeA}, implies that 
	\begin{align} \label{masik_veges}
	\int_{M}F^{*2}(x, Dv)~\mathrm{d}\mathsf{m} 
	& = 4 \int_{A_1}F^{*2}(x, D\varphi)~\mathrm{d}\mathsf{m} + 4 \int_{M \setminus A_1}F^{*2}(x, Du)~\mathrm{d}\mathsf{m}   \nonumber \\
	& \leq 4 \left(\sup_{x \in A_1} F^*(x,D\varphi)\right)^2 \textrm{Vol}_F(A_1) + 4 \int_{M}F^{*2}(x, Du)~\mathrm{d}\mathsf{m} \nonumber \\
	& \leq 8 C \varepsilon \cdot \left(\sup_{x \in A_1} F^*(x, D\varphi)\right)^2 + 4 \varepsilon^2 .
	\end{align}
	
	As $\varphi \in C_0^{\infty}(M)$, letting $\varepsilon \rightarrow 0$ in \eqref{masik_veges} yields that
	$$	\inf_{\varepsilon > 0} \int_{M}F^{*2}(x, Dv_\varepsilon)\mathrm{d}\mathsf{m}  = 0,$$
	i.e. $\capac_2(\overline{B_r}, M) = 0$. It follows that $(M, F)$ is $2$-parabolic, which is a contradiction.
\end{proof}

Next, we can prove the following inequality.

\begin{thm}\label{troyanov3}
	Let $(M,F)$ be a forward geodesically complete $2$-hyperbolic Finsler ma\-ni\-fold with uniformly convex Finsler structure $F$, 
	satisfying $\mathrm{Ric}_N \geq -\kappa$, $\kappa > 0$, 
	and let $K \subset M$ be a compact set. 
	Then there exists a positive constant $C = C(N, \kappa, \lambda, \Lambda, K)$ such that 
	\begin{equation*}
	\left( \int_{K} u^2 ~\mathrm{d}\mathsf{m} \right)^\frac{1}{2} ~\leq~ 
	C \left(\int_{M}F^{*2}(x, Du)~\mathrm{d}\mathsf{m}\right)^{\frac{1}{2}},
	\end{equation*}
	for all $u \in C_0^\infty(M)$.
\end{thm}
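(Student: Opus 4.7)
The plan is to combine the mean-value Poincar\'e inequality of Theorem \ref{poincare} with the $L^{1}$ bound of Theorem \ref{inegimportanta}, both of which are already available under the stated hypotheses. Since $K$ is compact, first fix a base point $x_0 \in M$ and choose $R > 0$ large enough that $K \subset B_R := B^+_R(x_0)$; then $\int_K u^2\, \mathrm{d}\mathsf{m} \leq \int_{B_R} u^2\, \mathrm{d}\mathsf{m}$, and it suffices to bound $\|u\|_{L^2(B_R)}$ by $\bigl(\int_M F^{*2}(x,Du)\, \mathrm{d}\mathsf{m}\bigr)^{1/2}$.

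The first step is to split $u$ into its $B_R$-mean and the zero-mean remainder. Writing $\overline{u} = \mathrm{Vol}_F(B_R)^{-1} \int_{B_R} u\, \mathrm{d}\mathsf{m}$ and using the triangle inequality in $L^2(B_R)$, one obtains
\[
\|u\|_{L^2(B_R)} \leq \|u - \overline{u}\|_{L^2(B_R)} + |\overline{u}|\, \mathrm{Vol}_F(B_R)^{\frac{1}{2}}.
\]
Since $u \in C_0^{\infty}(M) \subset W^{1,2}_{\mathrm{loc}}(M)$, Theorem \ref{poincare} immediately estimates the first summand by $C_1\bigl(\int_{B_R} F^{*2}(x, Du)\, \mathrm{d}\mathsf{m}\bigr)^{1/2}$ for some $C_1 = C_1(N,\kappa,\lambda,\Lambda,R)$, and the right-hand side is dominated by $C_1\bigl(\int_M F^{*2}(x, Du)\, \mathrm{d}\mathsf{m}\bigr)^{1/2}$.

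For the second summand, Theorem \ref{inegimportanta} is applied to $u$ itself. To invoke it, one checks that every $u \in C_0^{\infty}(M)$ lies in $E_0(B_R, M) \cap C_0(M)$: $\|u\|_{E(B_R,M)}$ is finite because $u$ is smooth with compact support, so $u$ already belongs to $C_0^{\infty}(M)$ and hence trivially to its $\|\cdot\|_{E(B_R,M)}$-closure $E_0(B_R, M)$, while $u \in C_0(M)$ is clear. Thus
\[
|\overline{u}| \leq \frac{1}{\mathrm{Vol}_F(B_R)} \int_{B_R} |u|\, \mathrm{d}\mathsf{m} \leq \frac{C_2}{\mathrm{Vol}_F(B_R)} \biggl( \int_M F^{*2}(x, Du)\, \mathrm{d}\mathsf{m} \biggr)^{\frac{1}{2}}
\]
for some $C_2 = C_2(N,\kappa,\lambda,\Lambda,R)$. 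Combining the two estimates yields the desired inequality with final constant $C = C_1 + C_2\, \mathrm{Vol}_F(B_R)^{-1/2}$, which depends only on $N, \kappa, \lambda, \Lambda$ and on $R = R(K)$.

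The argument is short once Theorems \ref{poincare} and \ref{inegimportanta} are in hand, so there is no serious technical obstacle; the point worth emphasising is where the $2$-hyperbolicity hypothesis actually enters. The Poincar\'e bound alone controls $u$ only modulo constants, and on a $2$-parabolic manifold constants can be approximated in $W^{1,2}(M)$ by compactly supported functions with arbitrarily small gradient energy, so no estimate of the claimed form could possibly hold. The role of $2$-hyperbolicity, made quantitative by Theorem \ref{inegimportanta}, is precisely to bound $|\overline{u}|$ by $\|F^*(\cdot, Du)\|_{L^2(M)}$, which rules out this obstruction and closes the estimate.
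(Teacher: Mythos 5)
Your proposal is correct and follows essentially the same route as the paper: enlarge $K$ to a forward geodesic ball $B_R$, split $u$ into $\overline{u}$ plus a zero-mean part, control the first piece by Theorem \ref{poincare} and $|\overline{u}|$ by Theorem \ref{inegimportanta}, arriving at the same constant $C_1 + C_2\,\mathrm{Vol}_F(B_R)^{-1/2}$. Your explicit check that $C_0^\infty(M)\subset E_0(B_R,M)\cap C_0(M)$ is a small detail the paper leaves implicit, but it does not change the argument.
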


\begin{proof}
	Let $u \in C_0^\infty(M)$ and  $B_R \subset M$ be a forward geodesic ball of radius $R$ such that  $K \subset B_R$. 
	Define $\overline{u}$ to be the mean value of $u$ on $B_R$.
	
	By applying Theorems \ref{poincare} and \ref{inegimportanta}, it follows that there exist the constants $C_1, C_2 > 0$ such that
	\begin{align*}
	\|u\|_{L^2(K)} & \leq \|u\|_{L^2(B_R)} \leq \|u- \overline{u}\|_{L^2(B_R)} + \| \overline{u} \|_{L^2(B_R)}  \\
	&\leq C_1 \left(\int_{B_R} F^{*2}(x, Du) ~\mathrm{d}\mathsf{m}\right)^{\frac{1}{2}}+| \overline{u} |\mathrm{Vol}_F(B_R)^{\frac{1}{2}}  \\
	&\leq C_1 \left(\int_{M} F^{*2}(x, Du) ~\mathrm{d}\mathsf{m}\right)^{\frac{1}{2}} + 
	\mathrm{Vol}_F(B_R)^{-\frac{1}{2}}\int_{B_R} |u| ~\mathrm{d}\mathsf{m} \\
	&\leq \left( C_1 + 
	C_2 \mathrm{Vol}_F(B_R)^{-\frac{1}{2}} \right)   \left(\int_{M} F^{*2}(x, Du) ~\mathrm{d}\mathsf{m}\right)^{\frac{1}{2}} .
	\end{align*}
\end{proof}

By using Theorem \ref{troyanov3} we can prove the following inclusion.

\begin{thm}\label{theorem_inclusion}
	Let $(M,F)$ be a geodesically complete $2$-hyperbolic Finsler manifold with uniformly convex Finsler structure $F$, $r_F < \infty $ and $\mathrm{Ric}_N \geq -\kappa$, $\kappa > 0$. 
	Let $K \subset M$ be a compact set with $\capac_2(K,M) = 0$. Then the inclusion
	$D^{1,2}(M) \subset D^{1,2}(M \setminus K)$ holds.
\end{thm}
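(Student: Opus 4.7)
The plan is to establish the density of $C_0^{\infty}(M \setminus K)$ in $C_0^{\infty}(M)$ with respect to the norm $\|\cdot\|_{D^{1,2}(M)}$ introduced in \eqref{D_norm}. Since $D^{1,2}(M)$ is by definition the completion of $C_0^{\infty}(M)$ in this norm, and since the functions of $C_0^{\infty}(M \setminus K)$ sit isometrically inside $D^{1,2}(M)$ (via extension by zero on $K$), a standard diagonal argument will upgrade this density to the claimed inclusion $D^{1,2}(M) \subset D^{1,2}(M \setminus K)$.

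Fix $u \in C_0^{\infty}(M)$ and set $K' = \supp u$. Using the hypothesis $\capac_2(K, M) = 0$, I would select a sequence $\psi_n \in C_0^{\infty}(M)$ with $0 \leq \psi_n \leq 1$, with $\psi_n \equiv 1$ on an open neighborhood of $K$, and with $\int_M F^{*2}(x, D\psi_n)~\mathrm{d}\mathsf{m} \to 0$. The natural approximants are $v_n := u(1 - \psi_n)$: since $\psi_n$ equals one on a neighborhood of $K$, each $v_n$ vanishes on a neighborhood of $K$ while still having compact support in $M$, hence $v_n \in C_0^{\infty}(M \setminus K)$.

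To estimate the $D^{1,2}$-distance, I would expand $D(u\psi_n) = \psi_n\, Du + u\, D\psi_n$ and combine the subadditivity and positive homogeneity of $F^{*}$ with the reversibility bound $F^{*}(x, -\alpha) \leq r_F\, F^{*}(x, \alpha)$ (available since $r_F < \infty$) to obtain the pointwise inequality
\begin{equation*}
F^{*2}(x, D(u\psi_n)) \leq 2\,\psi_n^{2}\, F^{*2}(x, Du) + 2\, r_F^{2}\, u^{2}\, F^{*2}(x, D\psi_n).
\end{equation*}
After integration, the second term is controlled by $2\, r_F^{2}\, \|u\|_{\infty}^{2} \int_M F^{*2}(x, D\psi_n)~\mathrm{d}\mathsf{m}$, which tends to zero by the choice of $\psi_n$.

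The main obstacle is the first term $\int_{K'} \psi_n^{2}\, F^{*2}(x, Du)~\mathrm{d}\mathsf{m}$, whose decay requires almost-everywhere information on $\psi_n$; this is precisely where the bounded-geometry hypotheses enter. Applying Theorem \ref{troyanov3} to the compact set $K'$ yields $\|\psi_n\|_{L^{2}(K')} \leq C(K')\, \|\psi_n\|_{D^{1,2}(M)} \to 0$, so after passing to a subsequence $\psi_n \to 0$ almost everywhere on $K'$. Since $0 \leq \psi_n^{2} \leq 1$ and $F^{*2}(x, Du) \in L^{1}(K')$, the dominated convergence theorem delivers $\int_{K'} \psi_n^{2}\, F^{*2}(x, Du)~\mathrm{d}\mathsf{m} \to 0$. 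Thus $v_n \to u$ in $D^{1,2}(M)$, which together with the diagonal argument sketched above closes the proof.
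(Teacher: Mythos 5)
Your proposal is correct and follows essentially the same route as the paper's own proof: both use cut-off functions $\psi_n$ furnished by $\capac_2(K,M)=0$ to form the approximants $u(1-\psi_n)\in C_0^{\infty}(M\setminus K)$, split the error into a term controlled by $\|u\|_\infty^2\int_M F^{*2}(x,D\psi_n)\,\mathrm{d}\mathsf{m}$ and a term on $\supp u$ controlled via Theorem \ref{troyanov3}, with the reversibility constant $r_F$ absorbing the sign issues in $F^*$. The only cosmetic difference is that the paper bounds $\int_{\supp u}\psi_n^2F^{*2}(x,Du)\,\mathrm{d}\mathsf{m}$ directly by $\sup F^{*2}(x,Du)\cdot\|\psi_n\|_{L^2(\supp u)}^2\to 0$ for the whole sequence, whereas you take a subsequence and invoke dominated convergence, which is a harmless detour.
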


\begin{proof}
	Let $\phi \in C_0^{\infty}(M)$. As $D^{1,2}(M)$ is the completion of $C_0^{\infty}(M)$ with respect to the norm $\| \cdot \|_{D^{1,2}(M)}$, 
	it is sufficient to prove that $\phi \in D^{1,2}(M \setminus K)$.
	
	As $\capac_2(K,M) = 0$, it follows that there exists a sequence $(u_k)_{k \in \mathbb{N}}$ in $C_0^{\infty}(M)$ such that $0 \leq u_k \leq 1$, 
	$u_k = 1$ on a neighborhood of $K$ and $\| u_k \|_{D^{1,2}(M)} \rightarrow 0$ as $ k \rightarrow \infty$.
	
	For every $k \in \mathbb{N}$ define $\phi_k = (1-u_k) \phi$. 
	It is clear that $\phi_k \in C_0^{\infty}(M \setminus K)$, $\forall k \in \mathbb{N}$. 
	Now we are going to prove that $\phi_k \rightarrow \phi$ in $\| \cdot \|_{D^{1,2}(M \setminus K)}$. 
	Indeed, we have that
	\begin{align}\label{utolso_egyenlotlenseg}
	&\left(\int_{M\setminus K} F^{*2}(x, D\phi_k - D\phi) ~\mathrm{d}\mathsf{m}\right)^{\frac{1}{2}} \nonumber \\
	&=\left(\int_{M\setminus K} F^{*2}(x, -D\phi u_k - \phi Du_k)~\mathrm{d}\mathsf{m}\right)^{\frac{1}{2}} \nonumber \\
	&\leq r_F \left \{ \left(\int_{M\setminus K} u_k^2F^{*2}(x, D\phi)~\mathrm{d}\mathsf{m}\right)^{\frac{1}{2}} 
	+ \left(\int_{M\setminus K} \phi^2F^{*2}(x,  Du_k)~\mathrm{d}\mathsf{m}\right)^{\frac{1}{2}} \right \}.  
	\end{align}
	
	On the one hand, since $\phi \in C_0^{\infty}(M)$ and $\| u_k \|_{D^{1,2}(M)} \rightarrow 0$ as $ k \rightarrow \infty$, it follows that
	\begin{align*}
	\left(\int_{M\setminus K} \phi^2F^{*2}(x,  Du_k)~\mathrm{d}\mathsf{m}\right)^{\frac{1}{2}}
	\leq ~ \sup_{x \in M} |\phi(x)| \left(\int_{M} F^{*2}(x,  Du_k)~\mathrm{d}\mathsf{m}\right)^{\frac{1}{2}} 
	\longrightarrow ~0
	\end{align*}
	when $ k \rightarrow \infty$.
	
	On the other hand, denoting by $S \coloneqq \supp \phi$ the compact support of $\phi$ and applying Theorem \ref{troyanov3}, we obtain that there exists a constant $C>0$ such that
	\begin{align*}
	& \left(\int_{M\setminus K} u_k^2F^{*2}(x, D\phi)~\mathrm{d}\mathsf{m}\right)^{\frac{1}{2}} 
	\leq \left(\int_{S} u_k^2F^{*2}(x, D\phi)~\mathrm{d}\mathsf{m}\right)^{\frac{1}{2}} \\
	& \leq ~\sup_{x \in S} F^*(x,D\phi) \left(\int_{S} u_k^2~\mathrm{d}\mathsf{m}\right)^{\frac{1}{2}} \\
	& \leq C~\sup_{x \in S} F^*(x,D\phi)~ \left(\int_{M}F^{*2}(x, Du_k)~\mathrm{d}\mathsf{m}\right)^{\frac{1}{2}} 
	\longrightarrow ~0 \quad \text{as} \quad k \rightarrow \infty.
	\end{align*}
	Letting $k \rightarrow \infty$ in \eqref{utolso_egyenlotlenseg} and using the fact that $r_F < \infty$ completes the proof.
\end{proof}

If we apply Theorem \ref{theorem_inclusion} to a compact set $K = M \setminus \Omega$ of zero capacity, where $\Omega \subset M$ is an open set, 
we obtain the inclusion \eqref{inclusion1}. More precisely, the following result holds.

\begin{cor} \label{corollForInclusion}
	Let $(M,F)$ be a geodesically complete, $2$-hyperbolic Finsler manifold such that $F$ is uniformly convex, $r_F < \infty$ and 
	$\mathrm{Ric}_N \geq -\kappa$, $\kappa > 0$. 
	Let $\Omega \subset M$ be an open set such that $M \setminus \Omega$ is compact with $\capac_2(M \setminus \Omega, M) = 0$. 
	Then we have the following inclusion:
	\begin{equation*}
	D^{1,2}(M) \subset D^{1,2}(\Omega).
	\end{equation*}
\end{cor}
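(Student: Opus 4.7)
The plan is to derive this corollary as a direct specialization of Theorem \ref{theorem_inclusion}. The hypotheses of the corollary and those of that theorem are almost identical: geodesic completeness, uniform convexity of $F$, $r_F < \infty$, $\mathrm{Ric}_N \geq -\kappa$ and $2$-hyperbolicity of $(M,F)$ appear in both. Thus all the differential-geometric assumptions already match without any additional work.

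First I would set $K \coloneqq M \setminus \Omega$. By assumption $K$ is compact with $\capac_2(K, M) = 0$, so $K$ meets exactly the hypothesis required of the compact set in Theorem \ref{theorem_inclusion}. Applying that theorem then yields
\begin{equation*}
D^{1,2}(M) \subset D^{1,2}(M \setminus K).
\end{equation*}

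The only remaining step is the trivial set identity $M \setminus K = M \setminus (M \setminus \Omega) = \Omega$, which substituted into the above inclusion gives exactly $D^{1,2}(M) \subset D^{1,2}(\Omega)$, as claimed. There is no genuine obstacle here: the corollary is really just a restatement of Theorem \ref{theorem_inclusion} in the language of the open set $\Omega$ rather than the compact removed set $K$. The substantive analytic content, namely the cutoff construction $\phi_k = (1-u_k)\phi$, the control of $\int \phi^2 F^{*2}(Du_k)\,\mathrm{d}\mathsf{m}$ via $\|u_k\|_{D^{1,2}(M)} \to 0$, and the control of $\int u_k^2 F^{*2}(D\phi)\,\mathrm{d}\mathsf{m}$ via the local Poincaré-type estimate in Theorem \ref{troyanov3}, has already been carried out in the proof of Theorem \ref{theorem_inclusion}, and the hypothesis $r_F < \infty$ needed there to absorb the non-reversibility in the norm of $-D\phi\, u_k - \phi\, Du_k$ is likewise already present in the corollary's assumptions.
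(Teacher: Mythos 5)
Your proposal is correct and is exactly how the paper obtains this corollary: it applies Theorem \ref{theorem_inclusion} to the compact set $K = M \setminus \Omega$ of zero capacity and uses the identity $M \setminus K = \Omega$. Nothing further is needed.
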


Applying Corollary \ref{corollForInclusion} yields Corollary \ref{corollForInclusion2}.

\noindent
\textbf{Acknowledgment.}
Á. Mester is supported by the National Research, Development and Innovation Fund of Hungary, financed under the K$\_$18 funding scheme, Project No. 127926. 
An anonymous reviewer is thanked for carefully reading the manuscript and suggesting substantial improvements.

\end{document}